\dedicatory{To the memory of Kirill Mackenzie (1951--2020)}
\date{}
\thanks{This work was supported by the National Science Foundation under
grant No.1708033.}   
\title[BV operator generating higher Koszul brackets on  forms]{On  a Batalin--Vilkovisky  operator generating higher Koszul brackets on differential forms}
\author{Ekaterina~Shemyakova}
\address{Department of Mathematics,  University of Toledo, Toledo,  Ohio, USA}
\email{ekaterina.shemyakova@utoledo.edu}
\newtheorem{theorem}{Theorem}[section]
\newtheorem{lemma}{Lemma}[section]
\theoremstyle{definition}
\newtheorem{definition}{Definition}[section]
\newtheorem{example}{Example}[section]
\newtheorem{remark}{Remark}[section]
\def\co{\colon\thinspace}
\renewcommand{\leq}{\leqslant}
\renewcommand{\geq}{\geqslant}
\DeclareMathOperator{\DO}{DO_{\hbar}}
\DeclareMathOperator{\fDO}{\widehat{\DO}}
\newcommand{\hi}{\frac{\hbar}{i}}
\newcommand{\lhi}{{\hbar}/{i}}
\newcommand{\ih}{\frac{i}{\hbar}}
\newcommand{\hp}{{\hat p}}
\DeclareMathOperator{\symb}{\sigma}
\DeclareMathOperator{\Dens}{\mathfrak{Dens}}
\newcommand{\dr}{\delta_{\rho}}
\newcommand{\Dbar}{%
   {{D\mkern-14 mu
   \mathchoice{\raisebox{-2pt}{$\displaystyle \mathchar'26$}}
             {\raisebox{-2pt}{$\mathchar'26$}}
             {\raisebox{-1pt}{$\scriptstyle \mathchar'26$}}
             {\raisebox{-0.5pt}{$\scriptscriptstyle \mathchar'26$}}}
             \mkern 5 mu}}
\DeclareMathOperator{\fun}{\mathit{C^{\infty}}}
\DeclareMathOperator{\Ber}{Ber}
\DeclareMathOperator{\Vol}{Vol}
\DeclareMathOperator{\sign}{sgn}
\DeclareMathOperator{\ad}{ad}
\newcommand{\der}[2]{{\frac{\partial {#1}}{\partial {#2}}}}
\newcommand{\lder}[2]{{\partial {#1}/\partial {#2}}}
\newcommand{\RR}{\mathbb R}
\newcommand{\CC}{\mathbb C}
\newcommand{\ZZ}{{\mathbb Z}}
\newcommand{\p}{\partial}
\newcommand{\w}{{\mathbf{w}}}
\renewcommand{\a}{\alpha}
\renewcommand{\b}{\beta}
\newcommand{\e}{\varepsilon}
\newcommand{\s}{\sigma}
\renewcommand{\O}{\Omega}
\newcommand{\D}{\Delta}
\renewcommand{\o}{\omega}
\newcommand{\la}{{\lambda}}
\renewcommand{\d}{\delta}
\newcommand{\ft}{{\tilde f}}
\newcommand{\itt}{{\tilde \imath}}
\newcommand{\at}{{\tilde a}}
\newcommand{\ct}{{\tilde c}}
\newcommand{\xt}{{\tilde x}}
\newcommand{\Lt}{{\tilde L}}
\newcommand{\omt}{{\tilde \omega}}
\newcommand{\sit}{{\tilde \sigma}}
\DeclareMathOperator{\Vect}{\mathrm{Vect}}
\DeclareMathOperator{\Mult}{\mathfrak{A}}
\newcommand{\lsch}{{[\![}}
\newcommand{\rsch}{{]\!]}}
\newcommand{\Sinf}{S_{\infty}}
\newcommand{\Pinf}{P_{\infty}}
\newcommand{\Linf}{L_{\infty}}
\newcommand{\void}{\varnothing}
\newcommand{\tto}{{\linethickness{2pt}
		  \,\begin{picture}(1,0)
                   \put(0,0.26){\line(1,0){0.95}}
                   \put(0,0){$\boldsymbol{\rightarrow}$}
                   %\put(0,0){$\hphantom{\boldsymbol{\to}}$}
                  \end{picture}
                  }\,
}
\begin{document}
\begin{abstract} We introduce a formal $\hbar$-differential operator $\Delta$ that generates higher Koszul brackets on the algebra of (pseudo)differential forms on a $P_{\infty}$-manifold. Such an operator was first mentioned by Khudaverdian and Voronov in \texttt{arXiv:1808.10049}. (This operator is an analogue of the Koszul--Brylinski boundary operator $\partial_P$ which defines Poisson homology for an ordinary Poisson structure.)

Here we introduce $\Delta=\Delta_P$ by a different method and establish its properties. We show that this BV type operator generating higher Koszul brackets can be included in a one-parameter family of BV type formal $\hbar$-differential operators, which can be understood as a quantization of the cotangent $L_{\infty}$-bialgebroid. We obtain symmetric description
on both $\Pi TM$ and $\Pi T^*M$.

For the purpose of the above, we develop in detail a theory of formal $\hbar$-differential operators and also of operators acting on densities on dual vector bundles. In particular, we have a statement about operators that can be seen as a quantization
of the Mackenzie--Xu canonical diffeomorphism. Another interesting feature is that we are able to introduce a grading, not a filtration, on our algebras of operators. When operators  act on objects on vector bundles, we obtain a bi-grading.

\end{abstract}

\maketitle
% \tableofcontents

%%%%%%%%%%%%%%%%%%%%%%%%%%%%%%%%%%%%%%%%%%%%%%%%%%%%%%%%%%%%%%%%%%%%%%%%%
%%%%%%%%%%%%%%%%%%%%%%%%%%%%%%%%%%%%%%%%%%%%%%%%%%%%%%%%%%%%%%%%%%%%%%%%%
\section{Introduction}\label{sec.intro}
\emph{Higher Koszul brackets} generalize the classical (binary) Koszul bracket on forms on a Poisson manifold. They are defined on the algebra of pseudodifferential forms on a supermanifold with a $\Pinf$- or homotopy Poisson structure. Their construction was  introduced in 2008 by H.~ Khudaverdian and Th.~Voronov~\cite{tv:higherpoisson}.

It has been noticed by Th.~Voronov~\cite{tv:graded,tv:qman-mack,tv:highkosz} that a Lie algebroid structure and similar structures like $\Linf$-algebroids
can be seen as ideal objects which ``manifest'' themselves in different equivalent ways on ``neighbor'' vector bundles obtained from each other by dualization and parity reversion functor $\Pi$.

From this viewpoint, the Koszul bracket
is a manifestation on $\Pi TM$
of the Lie algebroid structure on the cotangent bundle $T^*M$ induced by a Poisson structure on $M$ and is the corresponding Lie--Schouten bracket, see
%the \emph{cotangent Lie algebroid}: a Poisson structure on $M$ induces a Lie algebroid structure in the cotangent bundle $T^*M$ (see
K.~Mackenzie~\cite[Ch.10]{mackenzie:book2005}.
%Then the Koszul bracket on $\O(M)$ is one of the equivalent ``manifestations'' of this Lie algebroid structure.
An equivalent manifestation on $\Pi T^*M$ of the same Lie algebroid structure is the Poisson-Lichnerowicz differential $d_P\co \Mult^{k}(M)\to \Mult^{k+1}(M)$. (Here $\Mult^{k}(M)$ denotes multivector fields  of degree $k$ on a manifold $M$.)
%(See more on ``manifestations'' in~\cite{tv:graded}; also e.g.~\cite{tv:qman-mack}, \cite{tv:highkosz}.)

%The classical Koszul bracket is a manifestation of the cotangent Lie algebroid on $\Pi TM$ of a Poisson manifold,
%see Mackenzie~\cite[Ch 10]{mackenzie:book2005}.
In the similar way, higher Koszul brackets are one of the equivalent manifestations of the $\Linf$-algebroid structure on the cotangent bundle of a $\Pinf$-manifold.

 It was shown by J.-L.~Koszul~\cite{koszul:crochet85} that classical Koszul bracket
%(for an ordinary Poisson manifold)
can be generated by a second order ``Batalin--Vilkovisky (BV) type''
differential operator $\p_P$, known as the  Koszul--Brylinski differential (it defines Poisson homology dual to Poisson cohomology~\cite{brylinski1988}). In the appendix to the recent paper~\cite{tv:highkosz}, it was briefly indicated how Koszul's construction can be extended to the ``higher'' case.

In the present letter, we introduce a BV type operator generating higher Koszul brackets by a different method and establish its properties. It is a formal $\hbar$-differential operator of (in general) infinite order. We develop a theory of such operators. Using this operator, we can consider  ``quantum higher Koszul brackets'' depending on parameter $\hbar$. %% and establish a quantum version of the $\Linf$-morphism from higher Koszul brackets to the canonical Schouten bracket. %%%%%%%%%% To be included!!!

Further, recall that in the classical situation of a Poisson manifold, the cotangent Lie algebroid  is actually a Lie bialgebroid (if one takes into account the tautological Lie algebroid structure of $TM$). This also generalizes to the higher case as an $\Linf$-bialgebroid~\cite{tv:nonlinearpullback},\cite{tv:linfbialg}. Here we show that the BV   operator generating higher Koszul brackets can be included in a one-parameter family of formal $\hbar$-differential operators which we can interpret as a structure of a ``quantum $\Linf$-bialgebroid''.

To keep the symmetry between the manifestations existing on the classical level, on the quantum level we need to introduce into consideration
densities of different weights.
%More broadly, constructions  of this paper can be seen as an example of  `. Think about Sasha Voronov

The structure of the letter is as follows. In Section~\ref{sec.origin}, we recall the classical Koszul bracket for a Poisson manifold and the BV operator generating it. Then we recall the construction of the sequence of ``higher  Koszul brackets'' for a homotopy Poisson or $\Pinf$-structure on a supermanifold due to Khudaverdian--Voronov\,\footnote{``Higher brackets'' means that besides a binary bracket, there a $3$-bracket, a $4$-bracket, etc.; but it also includes  ``lower'' order brackets such as   $0$- and   $1$-brackets.}.

In Section~\ref{sec.operator}, we define (following~\cite{tv:highkosz}) an operator serving as an analogue of Koszul's second order BV type operator, for higher Koszul brackets. It looks superficially similar to the classical operator. However, it is, in general, a differential operator of infinite order. More precisely,  it is a \emph{formal $\hbar$-differential operator}. We need  the theory of such operators,  which is  interesting also for broader purposes.

We develop such a theory in  subsection~\ref{subsec.formalop}. In particular, we show that any such an operator (of an infinite order from the conventional viewpoint) has a principal symbol, which is  a well-defined  formal function  on $T^*M$. Planck's constant $\hbar$ is treated as a formal variable and one of the generators of our algebra. This makes it possible to introduce a grading, rather than a filtration, in the space of our operators. For   a formal $\hbar$-differential operator,  we consider the sequences of ``quantum'' and ``classical'' brackets (first introduced in~\cite{tv:higherder}) and prove that the master Hamiltonian for the sequence of classical brackets is exactly the principal symbol.

Using the results above, we construct formal $\hbar$-differential operators that can be seen as  quantizations  of the classical Hamiltonians arising in the definition of higher Koszul brackets.

This immediately gives the following main statement: there exists an odd formal $\hbar$-differential operator $\D_P$ acting on pseudodifferential forms and generating higher Koszul brackets and which also   obeys $\D_P^2=0$ (``a BV operator for higher Koszul brackets''). Moreover, we show that the constructed BV operator can be combined with the operator $-i\hbar\,d$ (where $d$ is the de Rham differential\footnote{\,The purpose of inserting the factor $-i\hbar$ is to have an $\hbar$-differential operator.}) into a one-parameter family of BV operators. We interpret it as a ``quantization'' of the structure of the cotangent $\Linf$-bialgebroid of a $\Pinf$-manifold, in the manifestation on the bundle $\Pi TM$. We then look into a similar quantization in the dual manifestation on the bundle $\Pi T^*M$. It turns out that for a natural construction of a BV operator on $\Pi T^*M$ there is an obstruction, which is the modular class of a given $\Pinf$-structure. The formula can be remedied by a correction term, but we lose the symmetry between the manifestations on $\Pi TM$ and $\Pi T^*M$ that exist  classically.

In Section~\ref{sec.further}, we are concerned with developing the desired symmetric picture. For that, we have to depart from   operators acting on functions on a supermanifold and consider the more general case of operators acting on \emph{densities} of various weights. As the goal is to obtain BV operators for $\Linf$-bialgebroids in dual manifestations (note that we do not stick to one formal definition of what an $\Linf$-bialgebroid is, since it looks to be still a rather open question; but of course there is some general idea), we need to develop a theory of operators for dual bundles.

We use the fiberwise Fourier transform discovered by Th.~Voronov and A.~Zorich~\cite{tv:ivb} for the purposes of supermanifold integration theory. We use an $\hbar$-version of it for fiberwise densities.

And we also   introduce a new $\ZZ\times \ZZ$-grading for $\hbar$-differential operators on a vector bundle. (This $\ZZ\times \ZZ$-grading of operators can be seen as a quantum analogue of the graded manifold approach to the double vector bundle structure of the cotangent of a vector bundle as in~\cite{tv:qman-mack,tv:qman-esi}.) We prove that   the fiberwise $\hbar$-Fourier transform induces anti-isomorphisms of algebras of formal  $\hbar$-differential operators on dual vector bundles acting on densities of suitable weights. We also show   that the $\ZZ\times \ZZ$-grading undergoes a mirror reflection. In particular, this gives a quantum version of the fundamental Mackenzie--Xu canonical antisymplectomorphism~\cite{mackenzie:bialg}. (Which is induced in the limit $\hbar\to 0$.) We also show that the fiberwise $\hbar$-Fourier transform preserve the class of ``quantum pullbacks'' of Th.~Voronov~\cite{tv:microformal}, which are particular type of Fourier integral operators, with an explicit description of the action on  phase functions. In subsection~\ref{subsec.applcot}, we return from general theory to the concrete situation of our interest. We again consider the cotangent $\Linf$-bialgebroid for a $\Pinf$-manifold. The difference with the approach in Section~\ref{sec.operator}  is that now we construct BV type operators acting on half-densities, not on functions. (Worth mentioning that in the actual physical BV setup, the canonical odd Laplacian due to Khudaverdian~\cite{hov:semi} acts on half-densities.) This way we achieve a complete symmetry in the manifestations on $\Pi T^*M$ and $\Pi TM$. The modular class of a $\Pinf$-structure that was popping up in another approach, here has a different role; it seems to be an obstruction to a ``quantum lift'' of the anchor (such a lift would be given by an operator of ``quantum pullback'' type mentioned above).

\emph{Notation.}
%s and conventions.}
Here ``forms'' and ``multivector fields'' on ordinary or super manifold, are understood as functions on the supermanifolds $\Pi TM$ and $\Pi T^*M$ respectively. Hence our ``forms'' are actually
pseudodifferential forms in the supercase and inhomogeneous differential forms in the ordinary case.
%, unless we specifically need a differential form of a particular degree.
Similarly for multivector fields.

By $\O(M)$ we denote the algebra of forms,
%and by $\O^k(M)$  the space of $k$-forms.
by $\Mult(M)$ the algebra of multivector fields, and by $\Mult^k(M)$ multivectors of degree $k$ or $k$-vectors.
%(As a standard abuse of language, we may use `tensors' and `multivectors'  instead of `tensor fields' and `multivector fields' respectively.)
All super formulas are written for quantities that are homogeneous in the sense of parity, which is denoted by the tilde over a symbol, e.g. $\xt=0,1$ for $x$ even or odd. If $x^a$ are local coordinates on a (super)manifold, then forms and multivector fields are functions of the variables $x^a, dx^a$ and $x^a,x^*_a$ respectively. The variables $dx^a$ and $x^*_a$ have parities opposite to those of the corresponding coordinates: $\widetilde{dx^a}=\widetilde{x^*_a}=\at+1$, where $\at=\widetilde{x^a}$. Under a change of coordinates, the variables $x^*_a$ transforms in the same way as the partial derivatives $\p_a$. The canonical Poisson bracket (even) on $T^*M$ for a supermanifold $M$ is denoted $\{-,-\}$, while the canonical Schouten bracket on $\Pi T^*M$ (odd) is denoted $\lsch -, -\rsch$.

% \textcolor{red}{(It is our understanding that   theory of such operators is first given in the present paper.)
% formal $\hbar$-differential operators on supermanifolds as defined here.
% }

\emph{Acknowledgement.} The author is very thankful to Theodore Voronov for attracting her attention to this problem, as well as for valuable comments and discussions.  Many thanks also go to the referee for comments that helped to improve the exposition. 

%%%%%%%%%%%%%%%%%%%%%%%%%%%%%%%%%%%%%%%%%%%%%%%%%%%%%%%%%%%%%%%%%%%%%%%%%
%%%%%%%%%%%%%%%%%%%%%%%%%%%%%%%%%%%%%%%%%%%%%%%%%%%%%%%%%%%%%%%%%%%%%%%%%
\section{Recollection of higher Koszul brackets}\label{sec.origin}

\subsection{Binary Koszul bracket.}
The classical (binary) Koszul bracket on differential forms on a Poisson manifold was introduced by J.-L.~Koszul~\cite{koszul:crochet85}; the case of $1$-forms was known earlier, in particular, in the context of integrable systems. (See more in~\cite{yvette:modular2008}.) Below
%we present for convenience
are the main formulas (which we give in a version generalized to supermanifolds, see~\cite{tv:higherpoisson}). Let $\{f,g\}_P$ denote the Poisson bracket corresponding to a Poisson bivector $P=\frac{1}{2}\,P^{ab}(x)x^*_bx^*_a$, then the \emph{Koszul bracket} on forms denoted $[\o,\s]_P$ is given by
\begin{equation}\label{eq.binkosz}
    [f,g]_P:=0\,, \quad [df,g]_P:=\{f,g\}_P\,, \quad [df,dg]_P:=(-1)^{\ft}d\{f,g\}_P
\end{equation}
together with the symmetry, linearity and Leibniz conditions:
\begin{align}\label{eq.binkoszlin}
    [c\o,\s]_P&=(-1)^{\ct}c[\o,\s]_P\,,\\
    [\o,\s]_P&=(-1)^{\omt\sit}[\s,\o]_P\,, \label{eq.binkoszsym}\\
    [\o,\s\tau]_P&=[\o,\s]_P\tau+(-1)^{(\omt+1)\sit} \s[\o,\tau]_P\,, \label{eq.binkoszlei}
\intertext{which imply the Jacobi identity in the form}
    [\o,[\s,\tau]_P]_P&=(-1)^{\omt+1} [[\o,\s]_P,\tau]_P+(-1)^{(\omt+1)(\sit+1)} [\s,[\o,\tau]_P]_P\,. \label{eq.binkoszjac}
\end{align}
(The conditions~\eqref{eq.binkoszlin}, \eqref{eq.binkoszsym}, \eqref{eq.binkoszlei}, \eqref{eq.binkoszjac} together   mean  that $\O(M)$ is a ``Schouten'' or ``odd Poisson'' algebra under the Koszul bracket. Y.~Kosmann-Schwarzbach~\cite{yvette:derived-survey2004} refers to the Koszul bracket   as  ``Koszul-Schouten bracket''.) Note that the sign conventions followed here may be different from other sources; we chose those convenient for generalization to   higher  brackets.   %Explicit formula:
The Koszul bracket also satisfies
\begin{equation}\label{eq.binkoszder}
    d[\o,\s]_P = -[d\o,\s]_P+(-1)^{\tilde\o+1}[\o,d\s]_P\,.
\end{equation}

The derivation property~\eqref{eq.binkoszder} expresses the fact that $T^*M$ is actually a Lie bialgebroid (see~\cite{mackenzie:book2005} and references therein).

Koszul showed in~\cite{koszul:crochet85} that the bracket $[\o,\s]_P$ can be obtained from an odd second-order differential operator $\D$,
\begin{equation} \label{eq.koszulbryl}
    \D=\partial_P:=[d, i(P)]
\end{equation}
on the algebra $\O(M)$ by the formula
\begin{equation}\label{eq.binkoszbv}
    \D(\o\s)=\D(\o)\s +(-1)^{\tilde\o}\o\D(\s) + [\o,\s]_P
\end{equation}
Here $i(P)$ is the operator of the interior product with the bivector $P$. It decreases the degrees of forms by two, hence its commutator with $d$ is of degree $-1$. The operator $\partial_P$ is often referred to as the ``Koszul--Brylinski differential''. It plays the role of the boundary operator in the definition of Poisson homology~\cite{brylinski1988},  a dual notion    to   Poisson cohomology defined by the Lichnerowicz differential $d_P$ (similarly with the relation between Lie algebra homology and Lie algebra cohomology).

Equation~\eqref{eq.binkoszbv} is similar in form to the  analogous equation relating the canonical Schouten bracket on the algebra of multivector fields $\Mult(M)$ and the divergence operator $\d=\dr$ on multivector fields (defined using   a choice of a volume element $\rho$),
\begin{equation}\label{eq.schoutbv}
    \d(TS)=\d (T) S +(-1)^{\tilde T} T\d (S) +\lsch T,S\rsch\,,
\end{equation}
which is classically known (see e.g.~\cite{kirillov:invariant}).

Although we are used to thinking about divergence as a first order operator,
from the viewpoint of supermanifolds  $\d$ is an odd differential operator of order \emph{two},
\begin{equation}
    \d T=(-1)^{\at}\frac{1}{\rho(x)}\der{}{x^a}\left(\rho(x)\der{T}{x^*_a}\right)
\end{equation}
for a multivector field $T=T(x,x^*)$ (see e.g.~\cite[Ch.5]{tv:git}).
Since their appearance in quantum field theory~\cite{bv:perv,bv:vtor}, odd differential operators  generating an odd bracket by a formula like~\eqref{eq.binkoszbv} or \eqref{eq.schoutbv} are referred to as ``Batalin--Vilkovisky type operators''. (Geometric meaning of these operators
as odd analogues of the ordinary Laplacians
was first understood by H.~Khudaverdian~\cite{hov:deltabest} and A.S.~Schwarz~\cite{schwarz:bv}.
%for the first mathematical formulation of the Batalin--Vilkovisky geometry and
See also \cite{hov:khn1}, \cite{yvette:divergence}, \cite{tv:laplace1}, \cite{tv:laplace2}.)

Y.~Kosmann-Schwarzbach~\cite{yvette:divergence}
showed that the Koszul operator $\D$ is a BV operator in the narrow sense, i.e. it is the odd Laplacian constructed from a bracket and a volume element, where the bracket is the Koszul bracket and the volume element is the canonical invariant volume element on $\Pi TM$.

In the next subsection we recall a ``higher analogue'' of the classical Koszul bracket (when a Poisson structure is replaced by a $\Pinf$-structure) and in Section~\ref{sec.operator} we shall introduce and study an analogue for a higher setting of Koszul's BV operator. Unlike the second-order Batalin--Vilkovisky operators related with binary brackets, this will be a formal differential operator of infinite order. Binary Koszul
bracket can be (and in fact originally was)
defined on an ordinary manifold. As for higher Koszul brackets, for the theory to be nontrivial one really needs supermanifolds.

\subsection{Higher Koszul brackets for a $\Pinf$-structure.} \label{subsec.highkosz}
Let $M$ be  a supermanifold. A homotopy Poisson structure or $\Pinf$-structure is given on $M$ by an even function on $\Pi T^*M$, denote it $P$, which satisfies the equation $\lsch P,P\rsch=0$. (We only need   its infinite jet at $M\subset \Pi T^*M$, i.e. the power expansion in the variables $x^*_a$.) It generates the sequence of higher Poisson brackets on functions on $M$ by
Th.~Voronov's higher derived bracket formulas~\cite{tv:higherder},
\begin{equation}\label{eq.highpoiss}
    \{f_1,\ldots,f_n\}_P:= \lsch \ldots \lsch P,f_1\rsch, \ldots, f_n\rsch_{|M}
\end{equation}
(the restriction on $M$ at the right-hand side simply means setting $x^*_a=0$). Here $n=0,1,2,3\ldots$ The brackets~\eqref{eq.highpoiss} have alternating parities, in particular, the binary bracket is even. Condition $\lsch P,P\rsch=0$ is equivalent to the sequence of ``higher Jacobi identities'' for the brackets~\eqref{eq.highpoiss}.

Notice that $\Pinf$-structures are not some abstract generalization. They can naturally arise
in ordinary Poisson geometry. A.~Cattaneo and G.~Felder~\cite{cattaneo-felder:relative2007} discovered $\Pinf$-structures
from deformations of coisotropic submanifolds of an ordinary Poisson manifold.

Analogue of the Koszul bracket
for a $\Pinf$-structure was found by H.~Khudaverdian and Th.~Voronov~\cite{tv:higherpoisson}. It is a sequence of odd symmetric brackets on the algebra $\O(M)$, notation $[\o_1,\ldots,\o_n]_P$, where $\o_i\in \O(M)$ and $n=0,1,2,3,\ldots$ , defined on functions and exact $1$-forms by the equations
\begin{equation}\label{eq.highkoszini}
    [df_1, \ldots, df_{n-1}, f_n]_P:=\{f_1,\ldots,f_n\}_P\,,\quad [df_1,df_2,\ldots,df_n]_P:=(-1)^{\ft_1}d\{f_1,f_2,\ldots,f_n\}_P \ ,
\end{equation}
and for all other combinations the brackets are zero, and then extended to the whole algebra $\O(M)$ by the Leibniz rule as multiderivations. The resulting brackets are called \emph{higher Koszul brackets}.
They also showed
%~\cite{tv:higherpoisson}
that the higher Jacobi identities for Poisson brackets giving $\Pinf$-structure
%of functions
imply the higher Jacobi identities
%(with different signs)
for the higher Koszul brackets, so  they form what is known as an $\Sinf$-structure on the algebra $\O(M)$.  Note that a $\Pinf$-structure is a generalization of an even Poisson structure, while an $\Sinf$-structure is a generalization of an odd Poisson structure. In particular, brackets in the $\Pinf$ case are antisymmetric and of alternating parities while in the $\Sinf$ case they are symmetric and all odd.

In other words, the higher Koszul brackets for a $\Pinf$-structure on $M$ is an $\Sinf$-structure for the supermanifold $\Pi TM$. They are the Lie--Schouten brackets corresponding to an $\Linf$-algebroid structure in the cotangent bundle $T^*M$ induced by a $\Pinf$-structure on $M$.
%This is the ``higher analog'' of the construction of cotangent Lie algebroid for an ordinary Poisson manifold. From a general viewpoint, this  $\Linf$-algebroid structure has equivalent  manifestations  as a homological vector field on $\Pi T^*M$ and as an $\Sinf$-structure on $\Pi TM$. The latter is exactly the higher Koszul brackets of forms.

Any $\Sinf$-structure on a supermanifold is given~\cite{tv:higherder} by an odd function $H$ on its cotangent bundle   satisfying $\{H,H\}=0$ (an odd \emph{master Hamiltonian}).
This master Hamiltonian $H_P \in \fun(T^*(\Pi TM))$ for higher Koszul brackets  is constructed as follows~\cite{tv:higherpoisson,tv:highkosz}. We take
the Hamiltonian lift of  the homological vector on $\Pi T^*M$ and apply to it
%Since the higher Koszul brackets are the Lie--Schouten brackets for the cotangent $\Linf$-algebroid, the   master Hamiltonian for them $H_P \in \fun(T^*(\Pi TM))$ is obtained
the natural antisymplectomorphism $T^*(\Pi TM)\cong T^*(\Pi T^*M)$. This vector field $d_P\in \Vect(\Pi T^*M)$, which  is the analogue of the Lichnerowicz differential, is given by $d_P:=\lsch P,-\rsch$. %  (where $P$ is the Poisson tensor).
One may now note that the canonical Schouten bracket on $\Pi T^*M$ is itself derived from a master Hamiltonian, which  is the invariant quadratic function $D^*\in T^*(\Pi T^*M)$,
\begin{equation}\label{eq.masterhamforsch}
    D^*=(-1)^{\at}\pi^ap_a\,,
\end{equation}
where $p_a$, $\pi^a$ are the conjugate momenta for the variables $x^a,x^*_a$.  The function $D^*$ on $T^*(\Pi T^*M)$ corresponds under the identification $T^*(\Pi TM)\cong T^*(\Pi T^*M)$ to the function $D =dx^ap_a$ on $T^*(\Pi TM)$, the lift the de Rham differential. Therefore, vector field $d_P\in \Vect(\Pi T^*M)$ lifts to the Hamiltonian
\begin{equation}\label{eq.hamlichn}
    H_P^*=\{D^*, P\}\,.
\end{equation}
By using the   identification $T^*(\Pi TM)\cong T^*(\Pi T^*M)$, which preserves the canonical Poisson bracket on the cotangent bundle up to a sign, we arrive at
\begin{equation}\label{eq.masterkosz}
    H_P =\{D, P^*\}\,.
\end{equation}
and this is the sought-for master Hamiltonian for the higher Koszul brackets. Here $P^*\in \fun(T^*(\Pi TM))$ is obtained from $P=P(x,x^*)$ by substituting $x^*_a=\pi_a$. It is also possible to write down an explicit formula for $H_P$,
\begin{equation}
    H_P=dx^a \der{P}{x^a}(x,\pi) + (-1)^{\at} \der{P}{x^*_a}(x,\pi)p_a\,
\end{equation}
(see~\cite{tv:higherpoisson}), but we do not need it.

 \begin{remark} We are using many times
 the natural diffeomorphism $T^*E\cong T^*(E^*)$ for an arbitrary vector bundle $E$ preserving the canonical Poisson brackets up to a sign, which was discovered in~\cite{mackenzie:bialg} and is called the \emph{Mackenzie--Xu transformation}. See its generalization to the super case and an odd version in~\cite{tv:graded,tv:qman-mack}.
 \end{remark}

%%%%%%%%%%%%%%%%%%%%%%%%%%%%%%%%%%%%%%%%%%%%%%%%%%%%%%%%%%%%%%%%%%%%%%%%%
%%%%%%%%%%%%%%%%%%%%%%%%%%%%%%%%%%%%%%%%%%%%%%%%%%%%%%%%%%%%%%%%%%%%%%%%%
\section{Generating BV operator}\label{sec.operator}
\subsection{Construction of the operator}
Let $M$ be a supermanifold endowed with a $\Pinf$-structure. Our goal is to prove that the following formula gives an operator  generating the higher Koszul brackets on $\O(M)$\,:
\begin{equation}\label{eq.hdelp}
    \D_P:=[d, \hat P]\,.
\end{equation}
Here $P$ is the Poisson tensor specifying a $\Pinf$-structure on $M$. Recall that it is an even function on $\Pi T^*M$ satisfying $\lsch P,P\rsch=0$.  Notation $\hat P$ has the following meaning. For an arbitrary multivector field $P=P(x,x^*)$,
\begin{equation}\label{eq.Phat}
    \hat P:=P\left(x,\hi\der{}{dx}\right)\,.
\end{equation}
It is a ``vertical'' formal $\hbar$-differential operator (see the next subsection) on $\O(M)$ canonically corresponding to $P$.
(It is very close to the standard notion of the interior product $i(P)$ defined for    multivectors of   fixed degree,  from which it differs   by the factor of $(\lhi)^k$, where $k$ is the degree.) A quick way of defining $\hat P$ is by the Berezin integral
\begin{equation}\label{eq.Phatint}
    (\hat P\o)(x,dx)=\int \Dbar(x^*)D(dx')e^{\ih(dx-dx')x^*}P(x,x^*)\,\o(x,dx')\,
\end{equation}
(see more in subsection~\ref{subsec.dualiz}). Integration in~\eqref{eq.Phatint} over $\Pi T_x^*M\times \Pi T_xM$.
Here   notation such as $\Dbar(x^*)$ means the coordinate volume element normalized by  the factor arising in the inverse Fourier transform.

The exact statement is as follows.

\begin{theorem} \label{thm.DP}
The operator $\D_P$ defined by~\eqref{eq.hdelp} is a formal $\hbar$-differential operator on the algebra $\O(M)$. The sequence of classical brackets generated by $\D_P$ is   the higher Koszul brackets corresponding to a $\Pinf$-structure given by $P$.
\end{theorem}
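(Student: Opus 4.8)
### Proof Plan

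The plan is to verify the two assertions of the theorem separately, but both will flow from understanding the operator $\D_P=[d,\hat P]$ through the formal $\hbar$-differential operator calculus developed in subsection~\ref{subsec.formalop}. First I would confirm that $\D_P$ is a formal $\hbar$-differential operator. Since $d$ (or rather $-i\hbar\,d$, to make it $\hbar$-homogeneous) and $\hat P$ are both such operators by construction---$\hat P$ is the vertical operator $P(x,\frac{\hbar}{i}\frac{\p}{\p(dx)})$ defined in~\eqref{eq.Phat}---their graded commutator is again a formal $\hbar$-differential operator. The one point needing care is that $\hat P$ is of infinite order because $P$ is only a formal power series in $x^*$; I would rely on the theory established earlier showing that the space of such operators is closed under composition and (graded) commutator, so that $\D_P$ lands in the correct algebra and possesses a well-defined principal symbol as a formal function on $T^*M$.

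The substantive claim is that the sequence of \emph{classical} brackets generated by $\D_P$ reproduces the higher Koszul brackets~\eqref{eq.highkoszini}. The key result I would invoke is the one stated in the introduction: for a formal $\hbar$-differential operator, the master Hamiltonian for its sequence of classical brackets equals its principal symbol. Thus the whole problem reduces to computing the principal symbol of $\D_P$ and checking that it coincides with the master Hamiltonian $H_P$ of~\eqref{eq.masterkosz}. So the strategy is: (i) compute $\symb(\hat P)$, which should be $P^*(x,\pi)$, the function obtained from $P$ by replacing $x^*_a$ with the momentum $\pi_a$ conjugate to $dx^a$ (up to the $\hbar/i$ bookkeeping absorbed into the symbol normalization); (ii) recall that the symbol of $-i\hbar\,d$ is $D=dx^a p_a$; and (iii) use that the principal symbol of a commutator is the Poisson bracket of the principal symbols, giving $\symb(\D_P)=\{D,P^*\}=H_P$.

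The main obstacle will be step (iii)---controlling the symbol of the commutator to leading order and verifying that the normalizations are consistent. For finite-order operators the statement $\symb([A,B])=\{\symb A,\symb B\}$ is standard, but here $\hat P$ has infinite order, so I would need the grading (not merely filtration) on the operator algebra, with $\hbar$ as a generator, to make sense of ``principal symbol'' and of the bracket identity uniformly across all orders. The delicate bookkeeping is the interplay between the factor $\hbar/i$ sitting inside $\hat P$, the factor $-i\hbar$ multiplying $d$, and the normalization implicit in the Berezin-integral description~\eqref{eq.Phatint}; these must combine so that the spurious powers of $\hbar$ and $i$ cancel and the symbol is exactly $\{D,P^*\}$ with the correct sign. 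Once the symbol is identified with $H_P$, the higher derived bracket formulas and the fact that $H_P$ is the master Hamiltonian for the higher Koszul brackets (established in subsection~\ref{subsec.highkosz} via the Mackenzie--Xu identification $T^*(\Pi TM)\cong T^*(\Pi T^*M)$) finish the proof, since the classical brackets of an operator are by definition the derived brackets of its principal symbol.
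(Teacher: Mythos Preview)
Your proposal is correct and follows essentially the same route as the paper. Both arguments reduce the claim to computing $\symb(\D_P)$ via the identity $\symb(i\hbar^{-1}[A,B])=\{\symb A,\symb B\}$ applied to $A=-i\hbar\,d$ and $B=\hat P$, and then invoke Theorem~\ref{thm.symbol} together with the identification $H_P=\{D,P^*\}$ from subsection~\ref{subsec.highkosz}; the paper also flags the same normalization subtlety you mention, namely that $d$ itself is not $\hbar$-differential, so one must insert the factor $-i\hbar$ before taking symbols (otherwise one would wrongly conclude $\symb([d,\hat P])=0$).
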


In other words, we claim that
\begin{equation}\label{eq.highkoszbydp}
    [\o_1,\ldots,\o_n]_P=\lim\limits_{\hbar\to 0}(-i\hbar)^{-n}[\ldots[\D_P,\o_1],\ldots,\o_n](1)
\end{equation}
for all $n=0,1,2,\ldots$  and $\o_i\in\O(M)$\,. At the right-hand side we identify a differential form $\o$ with the  operator of multiplication by $\o$ on the algebra $\O(M)$, and the result is evaluated at $1\in \O(M)$.

\begin{example}\label{ex.ordpoiss}
Suppose our $\Pinf$-structure is an ordinary Poisson structure, i.e. $P=\frac{1}{2}\,P^{ab}(x)x^*_bx^*_a$ is a bivector field. Then
\begin{equation*}
    \hat P=-\frac{\hbar^2}{2}\,P^{ab}(x)\der{}{dx^b}\der{}{dx^a}
\end{equation*}
differs by the factor of $-\hbar^2$ from $i(P)$. Hence $\D_P=[d,\hat P]=-\hbar^2[d,i(P)]=-\hbar^2 \p_P$ where $\p_P$ is defined by~\eqref{eq.koszulbryl}. Note  that since $d(1)=i(P)(1)=0$, we have $\D_P(1)=0$. Consider the right-hand side of~\eqref{eq.highkoszbydp}. Since here $\hat P$ is a second-order differential operator on the algebra $\O(M)$, the commutator $[\ldots[\D_P,\o_1],\ldots,\o_n]$ gives zero for $n>2$. Consider $n=0, 1, 2$. For $n=0$, we have $\D_P(1)=0$. For $n=1$, we have $[\D_P,\o](1)=\D_P(\o 1)-(-1)^{\tilde \o}\o\D_P(1)=\D_P(\o)=-\hbar^2\p_P(\o)$. Hence
\begin{equation*}
    (-i\hbar)^{-1}[\D_P,\o](1)=-i\hbar\, \p_P(\o)
\end{equation*}
and in the limit $\hbar\to 0$ we get zero. Finally  for $n=2$, we obtain
\begin{multline*}
    (-i\hbar)^{-2}[[\D_P,\o_1],\o_2](1)= [[\p_P,\o_1],\o_2](1)=\\
    \p_P(\o_1\o_2)-\p_P(\o_1)\o_2-(-1)^{\tilde \o_1}\o_1\p_P(\o_2)=[\o_1,\o_2]_P\,,
\end{multline*}
as claimed.
\end{example}

Hence we  conclude that the operator $\D_P$ given by~\eqref{eq.hdelp} indeed generalizes the Koszul--Brylinski operator   $\partial_P$ given by~\eqref{eq.koszulbryl} and formula~\eqref{eq.highkoszbydp} generalizes Koszul's formula~\eqref{eq.binkoszbv}.

\begin{example}\label{ex.difpoiss} {A slight extension of the previous example can be obtained by allowing a linear term in a $\Pinf$-structure: $P=P_1+P_2$, where $P_1=P^a(x)x^*_a$ and $P_2=\frac{1}{2}\,P^{ab}(x)x^*_bx^*_a$. It is convenient to introduce a vector field $Q\in\Vect(M)$ by $Q=Q^a(x)\p_a$, $Q^a=-P^a$. Then $Q$ is homological and the $\Pinf$-structure given by $P$ is a \emph{differential Poisson structure} in the sense that $P_2$ defines a usual (binary) Poisson bracket and the homological field $Q$ is a derivation of the bracket. The operator $\D_P$ corresponding to such $P$ will have the form $\D_P=[d,\hat P]=-i\hbar L_Q-\hbar^2 \p_{P_2}$, where $\p_{P_2}$ is the Koszul-Brylinski operator and $L_Q$ is the (odd) operator of the Lie derivative along $Q$. For the brackets generated by $\D_P$ we get again $\D_P(1)=0$, so the $0$-bracket is zero;  for the $1$-bracket,}
\begin{equation*}
    (-i\hbar)^{-1}[\D_P,\o](1)=L_Q(\o) -i\hbar\, \p_P(\o)\,,
\end{equation*}
{so for $\hbar\to 0$, }
\begin{equation*}
    [\o]_P=L_Q(\o)\,,
\end{equation*}
{and for the $2$-bracket one can see that we obtain}
\begin{equation*}
    [\o_1,\o_2]_P= [\o_1,\o_2]_{P_2}\,, 
\end{equation*}
{the Koszul bracket for the Poisson structure $P_2$\,.}
\end{example}

{A $\Pinf$-structure  $P=P_1+P_2$ of  Example~\ref{ex.difpoiss} is the simplest case of the structure given by the Cattaneo-Felder construction~\cite{cattaneo-felder:relative2007} on the supermanifold $\Pi E^*$, where $E$ is a vector bundle equipped with an ordinary Poisson structure $\bar P$ such that the zero section is coisotropic: if $\bar P$ has only terms of degrees $-1$ and $0$ in fiber coordinates on $E$, then on $\Pi E^*$ this gives a homological vector field of degree $+1$ (making $E^*$ a Lie algebroid) and a compatible Poisson bracket of degree $0$.}

Formula~\eqref{eq.hdelp} with a sketch of a brute-force proof %of the statement
were suggested in the appendix to~\cite{tv:highkosz} .
Here we prove Theorem~\ref{thm.DP} in a conceptual way. The underlying idea is that the operator~\eqref{eq.hdelp} should be seen as a ``quantization'' of the   cotangent $\Linf$-algebroid.  We will come back to the main claim in subsection~\ref{subsec.mainstate}. Before that we   need to develop some general theory, which also has  independent interest.

\subsection{Formal $\hbar$-differential operators and their symbols}
\label{subsec.formalop}

Recall
%the notion  of
$\hbar$-differential operators.
They can be defined algebraically for an arbitrary commutative (super)algebra over a ring of formal power series in $-i\hbar$ or for  a module over such an algebra. Basically they correspond to a   concept well-known  in the theory of linear partial differential operators (see e.g. Shubin~\cite{shubin:book}). Such are also the examples arising in quantum mechanics.

%For us it will be essential that
Here Planck's constant   is treated as a formal parameter and not as some small number.  So the algebraic definition is as follows~\cite{tv:microformal}. An operator is called \emph{$\hbar$-differential} of order $n$ (meaning $\leq n$) if its commutator with the multiplication operator by an element of the algebra is $-i\hbar$ times an $\hbar$-differential operator of order $n-1$, and the operators of negative order are zero. In particular, every $\hbar$-differential operator of order $n$ is a differential operator of   order $n$ in the usual sense. Informally, for a partial differential operator $L$ written in local coordinates, the condition that it is $\hbar$-differential means that every partial derivative occurring in $L$ carries  the factor of $-i\hbar$, and the coefficients can themselves depend on $\hbar$.

In other words, in  a local chart  the algebra of $\hbar$-differential operators is generated by arbitrary  functions $f=f(x)$ and the momentum operators $\hat p_a=-i\hbar \lder{}{x^a}$. There is the ``Heisenberg commutation relation''
\begin{equation}\label{eq.heis}
    [\hat p_a,f]=-i\hbar \der{f}{x^a}\,.
\end{equation}
Here functions are allowed to be formal power series in $\hbar$ (with non-negative powers).
Our operators act  on scalar functions, but   everything generalizes to operators acting on   sections of a vector bundle over    a manifold or supermanifold, and we will use it later.

A \emph{formal $\hbar$-differential operator} is defined as a formal expression (a non-commutative formal power series)
\begin{equation}\label{eq.formalL}
    L=L^0(x)+L^a(x)\hp_a  + L^{a_1a_2}(x)\hp_{a_1}\hp_{a_2} +\ldots
\end{equation}
where the coefficients are smooth functions that are also formal power series in $\hbar$.
%The expression~\eqref{eq.formalL} is just an infinite formal sum.
As we shall explain, such sums are invariant under changes of variables. We shall also show how they can be seen as actual operators.
But before that, we briefly return to $\hbar$-differential operators, i.e. finite sums.

The commutation relation~\eqref{eq.heis} is homogeneous in $\hp_a$ and $\hbar$ taken together. We introduce a grading  in the algebra of $\hbar$-differential operators   by defining the  \textbf{total degree}  of such an operator as the degree    in $\hp_a$ and $\hbar$. It is a grading,  not a filtration. Elements of  the so obtained graded algebra are $\hbar$-differential operators whose coefficients are polynomials in $\hbar$. Call them \emph{$\hbar$-differential operators of finite type}.  From~\eqref{eq.heis} it is clear that the total degree of an operator of finite type does not depend on its presentation as a non-commutative polynomial in the momentum operators $\hp_a$ with coefficients in the algebra of functions.

\begin{lemma} The total degree of an $\hbar$-differential operator  of finite type does not depend on a choice of coordinates.
\end{lemma}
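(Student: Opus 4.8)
The plan is to show that the total degree is a coordinate-independent notion by analyzing how the generators of the algebra of $\hbar$-differential operators transform under a change of coordinates. Since the total degree counts the combined degree in the momentum operators $\hp_a$ and in $\hbar$, it suffices to check that the transformation law for the $\hp_a$ preserves this degree. First I would fix two coordinate systems $x^a$ and $y^{a'}=y^{a'}(x)$ on overlapping charts, and compute $\hp_{a'}=-i\hbar\,\p/\p y^{a'}$ in terms of the old momentum operators. By the chain rule applied to the action on functions, $\p/\p y^{a'}=(\p x^a/\p y^{a'})\,\p/\p x^a$, and hence
\begin{equation*}
    \hp_{a'}=\der{x^a}{y^{a'}}(x)\,\hp_a\,.
\end{equation*}
Crucially, the transition coefficients $\p x^a/\p y^{a'}$ are functions (total degree $0$), and each new momentum operator is a function times \emph{one} old momentum operator, carrying exactly one factor coming with $-i\hbar$ inside $\hp_a$. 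Thus $\hp_{a'}$ has total degree $1$, matching $\hp_a$.

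The next step is to address the subtlety that in passing between coordinate systems one does not literally substitute a single monomial: reordering the product of a transition function and a momentum operator produces correction terms via the Heisenberg relation~\eqref{eq.heis}. The key observation, which makes the argument work, is that these corrections are \emph{total-degree preserving}. Indeed, whenever one moves a momentum operator $\hp_a$ past a function $f(x)$ using $[\hp_a,f]=-i\hbar\,\p f/\p x^a$, the commutator replaces a degree-$1$ factor $\hp_a$ by the degree-$1$ quantity $-i\hbar\,\p f/\p x^a$ (one factor of $\hbar$, no momentum). Hence every term generated by reordering has the same total degree as the term it came from. This is precisely why the total degree is a grading and not merely a filtration: the commutation relation is homogeneous of degree zero in the combined $(\hp_a,\hbar)$-grading, as already noted before the lemma.

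With these two facts in hand, the argument concludes formally. I would take a homogeneous $\hbar$-differential operator of finite type $L$ of total degree $n$, written as a noncommutative polynomial in the $\hp_a$ with function coefficients, and substitute the transformation law $\hp_a=(\p y^{a'}/\p x^a)\,\hp_{a'}$ term by term, bringing each resulting expression to the normal-ordered form~\eqref{eq.formalL} in the new momenta $\hp_{a'}$. By the two observations above, each monomial of total degree $n$ maps to a sum of monomials of total degree $n$ in the new generators, so $L$ is homogeneous of total degree $n$ in the $y$-chart as well. Since, as remarked in the text, the total degree of a finite-type operator does not depend on its presentation as a polynomial in the momentum operators, the resulting common value of the total degree is well defined in each chart and agrees across charts.

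The main obstacle is the bookkeeping in the reordering step: one must be confident that \emph{all} correction terms arising from repeatedly applying~\eqref{eq.heis} stay within the same total degree, including the possibility that the transition functions $\p x^a/\p y^{a'}$ themselves depend on $\hbar$ (they do not in the classical change of coordinates, but the coefficients of $L$ may). The clean way to handle this is to phrase the whole computation inside the graded algebra, treating $\hbar$ and the $\hp_a$ on equal footing from the start, so that the homogeneity of~\eqref{eq.heis} does the work automatically rather than tracking individual terms; the chain-rule transformation law for $\hp_a$ is then the only input specific to the change of coordinates.
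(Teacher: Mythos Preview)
Your proposal is correct and follows essentially the same approach as the paper: both arguments rest on the homogeneous transformation law $\hp_a=\der{x^{a'}}{x^a}\,\hp_{a'}$ together with the observation that the Heisenberg relation~\eqref{eq.heis} is homogeneous of total degree zero, so that reordering into normal form in the new momenta preserves the total degree term by term. The paper makes this explicit by writing $\hp_{a_1}\ldots\hp_{a_k}$ in the new chart as the tensorial top term plus correction terms $(-i\hbar)^{k-s}R_s$ with each $R_s$ of degree $s$ in the new momenta, which is exactly the bookkeeping you describe abstractly in your second and fourth paragraphs.
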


\begin{proof} \label{lem.transflaw}
The operator $\hp_a$ has the transformation law
\begin{equation*}
    \hp_a=\der{x^{a'}}{x^a}\, \hp_{a'}\,,
\end{equation*}
which is   homogeneous, assuming   the changes of coordinates  not depend on $\hbar$. For a product $\hp_{a_1}\ldots \hp_{a_k}$, we obtain
\begin{equation*}
    \hp_{a_1}\ldots \hp_{a_k}=
    \der{x^{a_1'}}{x^{a_1}}\, \hp_{a'_1}\ldots \der{x^{a_k'}}{x^{a_k}}\, \hp_{a'_k}\,,
\end{equation*}
where it remains to move all the ``new'' momentum operators %at the right-hand side
to the right of the coefficients. By induction we see that\footnote{For simplicity, we write
them without signs as if in purely even case, but everything holds in the general super case.}
\begin{equation*}
    \hp_{a_1}\ldots \hp_{a_k}=
    \der{x^{a_1'}}{x^{a_1}} \ldots \der{x^{a_k'}}{x^{a_k}}\,\hp_{a'_1}\ldots \hp_{a'_k}
    + (-i\hbar)R_{k-1}+ (-i\hbar)^2R_{k-2}+\ldots +   (-i\hbar)^kR_0\,,
\end{equation*}
where each $R_s$ is a linear combination of products of exactly $s$ operators $\hp_{a'}$ with some coefficients not depending on $\hbar$ at the left.
The claim immediately follows.
\end{proof}

Now if we have a formal series such as~\eqref{eq.formalL}, a formal power series in both $\hp$ and $\hbar$, we can re-arrange the summation into an infinite sum over the total degree     $n=0,1,2,\ldots$, so that for each $n$ there will be only a finite number of terms:
\begin{multline}\label{eq.formalL2}
    L=\sum_{k=0}^{+\infty} L^{a_1\ldots a_k}(x)\hp_{a_1}\ldots\hp_{a_k}=
    \sum_{k=0}^{+\infty}\sum_{r=0}^{+\infty} (-i\hbar)^rL_r^{a_1\ldots a_k}(x)\,\hp_{a_1}\ldots\hp_{a_k}=\\
    \sum_{n=0}^{+\infty}\Bigl(L_0^{a_1\ldots a_n}(x)\hp_{a_1}\ldots\hp_{a_n}+
    (-i\hbar)L_1^{a_1\ldots a_{n-1}}(x)\hp_{a_1}\ldots\hp_{a_{n-1}}+
    \ldots +(-i\hbar)^nL_n^0(x)\Bigr) \\
    \equiv  \sum_{n=0}^{+\infty} L^{[n]}\,.
\end{multline}
Here $L^{[n]}$ is  a finite-type $\hbar$-differential operator of total degree $n$, the component of total degree $n$ of an operator $L$. Hence formal $\hbar$-differential operators make an algebra, which is the formal completion of the graded algebra of   $\hbar$-differential operators of finite type.

\begin{lemma}
\begin{enumerate}
  \item A formal $\hbar$-differential operator  $L$ gives rise to a formal power series
\begin{equation}\label{eq.Lmodh}
    L\pmod \hbar  = \sum_{n=0}^{+\infty} L_0^{a_1\ldots a_n}(x)\,p_{a_1}\ldots\,p_{a_n}\,,
\end{equation}
which is a well-defined formal function on $T^*M$,    $p_a$ is identified with $\hp_a \pmod \hbar$;
  \item There is a well-defined  action of formal $\hbar$-differential operators on  functions which are formal power series in $\hbar$ and this action   respects  grading;
  \item There is a well-defined  action of formal $\hbar$-differential operators on  functions of the form $e^{\ih \la g(x)}$, where $g(x)$ is a formal power series in $\hbar$,   which  gives   products   of $e^{\ih \la g(x)}$ with   formal power series in both $\hbar$ and $\la$.
\end{enumerate}
\end{lemma}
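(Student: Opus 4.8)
The plan is to treat the three parts in order, since each builds on the arrangement of $L$ by total degree established in~\eqref{eq.formalL2}. The unifying idea is that the total-degree grading is the correct bookkeeping device: it guarantees that whenever we apply $L$ to an input, only finitely many terms contribute to each power of $\hbar$, so all resulting series are well-defined even though $L$ has infinite order in the classical sense. I will verify coordinate-invariance of each construction by leaning on the transformation law for $\hp_a$ from the preceding Lemma.

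For part (1), I would first observe that $L \pmod \hbar$ is obtained from~\eqref{eq.formalL2} by extracting, from each total-degree component $L^{[n]}$, only the summand with no explicit factor of $-i\hbar$, namely $L_0^{a_1\ldots a_n}(x)\hp_{a_1}\ldots\hp_{a_n}$, and then replacing each $\hp_a$ by the commuting symbol $p_a$. The key point is that modulo $\hbar$ the momentum operators commute (by the Heisenberg relation~\eqref{eq.heis}), so the resulting expression is a genuine formal function on $T^*M$ rather than an operator. Coordinate-invariance then follows directly from the previous Lemma: under a change of coordinates not depending on $\hbar$, the product $\hp_{a_1}\ldots\hp_{a_k}$ transforms tensorially up to terms carrying explicit factors of $-i\hbar$ (the $R_s$ remainders), and all of those vanish modulo $\hbar$. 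Hence the leading symbol $L_0^{a_1\ldots a_n}$ transforms as the components of a symmetric contravariant tensor, so $L\pmod\hbar$ is a well-defined formal function on $T^*M$, as claimed.

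For parts (2) and (3), I would define the action termwise on each total-degree component and check that the sum converges in the appropriate formal sense. For part (2), applying $L^{[n]}$ to a function $f(x,\hbar)$ that is a formal power series in $\hbar$ yields a function again of this form, and each $L^{[n]}$ raises the $\hbar$-adic order by at least... one needs to track that the component of total degree $n$ contributes to $\hbar$-powers bounded below, so that each fixed power of $\hbar$ receives contributions from only finitely many $n$; this is exactly what the grading secures, and it is why the action respects the grading. For part (3), the main computation is that $\hp_a$ acting on $e^{\ih \la g(x)}$ produces $\bigl(\la \,\partial g/\partial x^a + (\text{terms with extra factors of } \hbar)\bigr)e^{\ih \la g(x)}$, by the Leibniz/chain rule for $-i\hbar\,\partial/\partial x^a$; iterating, each $\hp_{a_1}\ldots\hp_{a_k}$ produces $e^{\ih\la g}$ times a polynomial in $\la$ whose coefficients are formal power series in $\hbar$. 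Summing over total degree and re-collecting again gives, for each fixed power of $\hbar$, a polynomial in $\la$ of bounded degree times the exponential, hence a well-defined formal power series in both $\hbar$ and $\la$.

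The step I expect to be the main obstacle is the convergence/well-definedness bookkeeping common to (2) and (3): one must show that for each fixed power of $\hbar$ (and, in (3), each fixed power of $\la$) only finitely many total-degree components $L^{[n]}$ contribute, so that the coefficient is a finite sum. The cleanest way to organize this is to note that in~\eqref{eq.formalL2} a component of total degree $n$ with an explicit factor $(-i\hbar)^r$ carries exactly $n-r$ momentum operators; since each momentum operator hitting the input in (2) or (3) either lowers the differentiation order or produces a further explicit $\hbar$ (via~\eqref{eq.heis} or the chain rule), the total $\hbar$-order of the output from $L^{[n]}$ grows at least linearly in $n$, so the contributions to any fixed power of $\hbar$ stabilize. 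Once this finiteness is in hand, coordinate-invariance of the action follows from invariance of each $L^{[n]}$ together with the invariance of the inputs, and the grading-compatibility in (2) is immediate from the construction.
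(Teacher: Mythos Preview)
Your overall strategy matches the paper's: use the total-degree decomposition $L=\sum L^{[n]}$ and track how each $L^{[n]}$ acts. Part~(1) is fine and is exactly the paper's argument. But there are two places where your bookkeeping is not sharp enough.

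For part~(2), ``grows at least linearly in $n$'' is weaker than what is actually true and weaker than what you need. The precise fact (which the paper states and uses) is that $L^{[r]}$ applied to a function of $\hbar$-degree zero produces something of $\hbar$-degree \emph{exactly} $r$: each term $(-i\hbar)^s L_s^{a_1\ldots a_{r-s}}\hp_{a_1}\ldots\hp_{a_{r-s}}$ in $L^{[r]}$ contributes $(-i\hbar)^s\cdot(-i\hbar)^{r-s}=(-i\hbar)^r$ times ordinary partial derivatives. A mere lower bound would give well-definedness but not the claim that the action \emph{respects the grading}; for that you need the exact equality $\deg_\hbar\bigl(L^{[r]}(f_s)\bigr)=r+s$.

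For part~(3), your intermediate claim that ``for each fixed power of $\hbar$, a polynomial in $\la$ of bounded degree'' is false. From $L^{[n]}$ (taking $g$ independent of $\hbar$ for clarity) one obtains a \emph{homogeneous} polynomial of total degree $n$ in $\hbar$ and $\la$ together; hence the coefficient of $\hbar^k$ coming from $L^{[n]}$ is proportional to $\la^{n-k}$, and as $n$ varies you get arbitrarily high powers of $\la$. The correct organizing statement, which is what the paper proves by induction on the number of $\hp$'s, is: $L^{[n]}\bigl(fe^{\ih\la g}\bigr)=P_{\hbar,\la}\,e^{\ih\la g}$ with $P_{\hbar,\la}$ homogeneous of degree $n$ in $(\hbar,\la)$. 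This immediately gives that the coefficient of $\hbar^k\la^m$ receives a contribution only from $L^{[k+m]}$, so the double formal series is well-defined. Your final conclusion is right, but the route you wrote down to reach it does not work as stated.
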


\begin{proof}
In the proof of Lemma~\ref{lem.transflaw} we observed the transformation law of a typical summand of a formal $\hbar$-differential operator. Modulo $\hbar$, it is the same as for the corresponding monomial in the variables $p_a$. This proves part 1. For part 2, if $L=\sum_{n=0}^{\infty} L^{[n]}$ as in~\eqref{eq.formalL2} and $f=\sum_{n=0}^{\infty}(-i\hbar)^nf_n$ is the expansion of a function $f\in\fun(M)[[\hbar]]$, then
\begin{equation*}
    L(f)=\sum_{n=0}^{\infty}\sum_{r+s=n} (-i\hbar)^sL^{[r]}(f_s)
\end{equation*}
and it remains to observe that
\begin{multline*}
    L^{[r]}(f_s)=L_0^{a_1\ldots a_r}\,\hp_{a_1}\ldots\hp_{a_r}(f)+
    (-i\hbar)L_1^{a_1\ldots a_{r-1}}\,\hp_{a_1}\ldots\hp_{a_{r-1}}(f)+
    \ldots +(-i\hbar)^rL_r^0\,f=\\
    (-i\hbar)^r\Bigl(L_0^{a_1\ldots a_r}\,\p_{a_1}\ldots\p_{a_r}f+
      L_1^{a_1\ldots a_{r-1}}\,\p_{a_1}\ldots\p_{a_{r-1}}f+
    \ldots + L_r^0\,f\Bigr)\,,
\end{multline*}
so it is of degree $r$ in $\hbar$. The action of an operator $L^{[r]}$ of total degree $r$ on a function of degree $s$ in $\hbar$ gives a function of degree $r+s$ in $\hbar$. As for part 3, consider the action of $\hp_a$ on a function of the form $f e^{\ih \la g}$\,; we obtain
\begin{equation*}
    \hp_a\bigl(fe^{\ih \la g}\bigr)=(-i\hbar\p_a f  +\la f \p_ag)\,e^{\ih \la g}\,.
\end{equation*}
Similarly,
\begin{equation*}
    \hp_a\hp_b\bigl(fe^{\ih \la g}\bigr)=
    \bigl((-i\hbar)^2\p_a\p_b f  + (-i\hbar)\la(\p_a f\p_b g + \p_b f\p_a g+ f\p_a\p_b g)
    +\la^2f\p_a g\,\p_b g\bigr)\,e^{\ih \la g}\,.
\end{equation*}
By induction we can see that always $L^{[n]}\bigl(fe^{\ih \la g}\bigr)=P_{\hbar,\la}\,e^{\ih \la g}$, where $P_{\hbar,\la}$ is a homogeneous polynomial of total degree $n$ in $\hbar$ and $\la$ whose term of degree $r$ in $\hbar$ is  polynomial of degree $n-r$ in   partial derivatives of $g$ of order $\leq r+1$ and linear in partial derivatives of $f$.
\end{proof}

Instead of writing an object modulo $\hbar$ we  also write $\lim\limits_{\hbar\to 0}$.

% \begin{remark}
% Considering objects modulo $\hbar$ ``algebraic style'' as we do in this subsection or writing it as $\lim\limits_{\hbar\to 0}$ (the traditional notation) is one and the same thing.
% \end{remark}

The formal function on $T^*M$ defined by formula~\eqref{eq.Lmodh} is called the \emph{principal symbol} of a formal $\hbar$-differential operator $L$ and will be denoted $\symb(L)$,
\begin{equation}\label{eq.symbL}
    \symb(L):= \sum_{n=0}^{+\infty} L_0^{a_1\ldots a_n}(x)\,p_{a_1}\ldots\,p_{a_n}\,,
\end{equation}
if $L$ is given by~\eqref{eq.formalL}, \eqref{eq.formalL2}.  It is different from a (coordinate-dependent) \emph{full symbol} of $L$  obtained by formally replacing $\hp_a$ by $p_a$ in the expansion~\eqref{eq.formalL} without setting $\hbar$ to zero. Also this principal symbol is different from the principal symbol of a differential operator of order $\leq n$ (which is a homogeneous polynomial of degree $n$ corresponding to the top order  derivatives).

\begin{lemma} \label{lem.symbcommut}
For formal $\hbar$-differential operators,
\begin{equation}\label{eq.symbprod}
    \symb(AB)=\symb(A)\symb(B)\,
\end{equation}
(hence $\symb([A,B])=0$    for all $A$, $B$).
The commutator $[A,B]$ is always divisible by $\hbar$ and
\begin{equation}\label{eq.symbcommut}
    \symb(i\hbar^{-1}[A,B])=\{\symb(A),\symb(B)\}\,,
\end{equation}
where at the right-hand side there is the Poisson  bracket on $T^*M$.
\end{lemma}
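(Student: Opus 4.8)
The plan is to prove the three assertions in sequence, with the multiplicativity~\eqref{eq.symbprod} of the principal symbol as the foundation, and then to deduce divisibility by $\hbar$ and~\eqref{eq.symbcommut} from it together with the biderivation property of the commutator.

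\emph{Multiplicativity.} First I would reduce to homogeneous operators. Since $\symb$ is additive and carries the component $L^{[n]}$ of total degree $n$ to the part of $\symb(L)$ of degree $n$ in the momenta $p_a$ (this is how $\symb$ was built from~\eqref{eq.formalL2}), while the product is graded in that $A^{[m]}B^{[k]}$ has total degree $m+k$, it suffices to establish~\eqref{eq.symbprod} for $A$, $B$ of finite type and homogeneous total degree and then sum over components. Writing $A=A_0^{a_1\ldots a_m}\hp_{a_1}\ldots\hp_{a_m}+(\text{terms carrying an explicit factor of }\hbar)$ and likewise for $B$, I would observe that the only contribution to the $\hbar^0$, top-order-in-$\hp$ part of $AB$ comes from the product of the two leading terms taken in normal order: by the Heisenberg relation~\eqref{eq.heis} every transposition of an $\hp$ past a coefficient function produces a factor of $-i\hbar$ and lowers the $\hp$-degree, so all reorderings drop out of the principal symbol. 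That part thus equals $A_0^{a_1\ldots a_m}B_0^{b_1\ldots b_k}\hp_{a_1}\ldots\hp_{a_m}\hp_{b_1}\ldots\hp_{b_k}$, which under $\hp_a\mapsto p_a$ is precisely $\symb(A)\symb(B)$, the signs being those of the supercommutative product on $T^*M$. Because $\fun(T^*M)$ is supercommutative we get $\symb(A)\symb(B)=(-1)^{\tilde A\tilde B}\symb(B)\symb(A)$, hence $\symb([A,B])=0$.

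\emph{Divisibility by $\hbar$.} Here I would argue on generators. The algebra of $\hbar$-differential operators is generated by the functions $f(x)$, the momenta $\hp_a$, and the central element $-i\hbar$, and for these generators the supercommutators are divisible by $\hbar$: indeed $[\hp_a,f]=-i\hbar\,\p_a f$ by~\eqref{eq.heis}, while $[f,g]=0$ and $[\hp_a,\hp_b]=0$. Since $-i\hbar$ is central, the operators divisible by $\hbar$ form a two-sided ideal, and because $[-,-]$ is a biderivation of the associative product, the set of pairs with $\hbar$-divisible commutator is stable under multiplying either argument. It therefore contains all pairs of operators of finite type, and the statement passes to the formal completion degreewise in the total grading.

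\emph{The Poisson bracket.} By the previous step $i\hbar^{-1}[A,B]$ is a genuine formal $\hbar$-differential operator, so $\symb(i\hbar^{-1}[A,B])$ is defined. I would show that both sides of~\eqref{eq.symbcommut}, viewed as functions of $B$ with $A$ fixed, are twisted derivations of $\symb(B)$ with the same sign rule: applying $\symb(i\hbar^{-1}(-))$ to $[A,BC]=[A,B]C+(-1)^{\tilde A\tilde B}B[A,C]$ and using multiplicativity yields
\begin{equation*}
\symb(i\hbar^{-1}[A,BC])=\symb(i\hbar^{-1}[A,B])\,\symb(C)+(-1)^{\tilde A\tilde B}\symb(B)\,\symb(i\hbar^{-1}[A,C])\,,
\end{equation*}
which is exactly the Leibniz rule for $\{\symb(A),\symb(B)\symb(C)\}$; the analogous identity holds in the first argument. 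Hence it suffices to verify~\eqref{eq.symbcommut} on generators, which is immediate: $\symb(i\hbar^{-1}[\hp_a,f])=\symb(\p_a f)=\p_a f=\{p_a,f\}=\{\symb(\hp_a),\symb(f)\}$ for the canonical bracket, while the remaining pairs of generators give $0=0$. The biderivation property then propagates the identity to all operators of finite type, and finally to the completion. The main obstacle is bookkeeping rather than conceptual: one must keep the total grading and the super signs consistent between the operator commutator and the Poisson bracket throughout, and check that multiplicativity, divisibility by $\hbar$, and the derivation identities all survive the passage from finite-type operators to their formal completion, which is legitimate since each is an identity among coefficients of a fixed total degree.
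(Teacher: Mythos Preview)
Your proposal is correct and follows essentially the same approach as the paper: establish multiplicativity of $\symb$ first, deduce $\symb([A,B])=0$ from supercommutativity, and then verify~\eqref{eq.symbcommut} on generators using that both the operator commutator and the canonical Poisson bracket are biderivations. Your version is considerably more explicit---the paper's proof compresses these steps into three sentences---and you add a separate direct argument for divisibility by $\hbar$, whereas in the paper this is implicit in $\symb([A,B])=0$ (vanishing of the principal symbol means every coefficient in the normal-ordered expansion has no $\hbar^0$ term).
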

\begin{proof}
Formula~\eqref{eq.symbprod} is obvious from the definition of principal symbol and the rules of multiplication of formal power series. Since the product of functions on $T^*M$ is commutative, $\symb([A,B])=[\symb(A),\symb(B)]=0$. To prove formula~\eqref{eq.symbcommut},  since both  commutator of operators and   Poisson bracket on $T^*M$ satisfy the Leibniz identity, it is sufficient to check it on the generators such as $\hp_a$ and $f(x)$, for which it becomes obvious.
\end{proof}

The  following definitions were introduced by Th.~Voronov~\cite{tv:higherder} as a modification of the construction of Koszul~\cite{koszul:crochet85}. {(See Remark~\ref{rem.hist} below.)}
\begin{definition}%[\cite{tv:higherder}, \cite{tv:microformal}]
For an operator $L$ on an algebra,
\begin{align}
\{f_1,\ldots,f_n\}_{L,\hbar}&:=
 (-i\hbar)^{-n}  \left[\ldots [L,f_1],\ldots,f_n\right](1)
\label{eq.qubrack}
\intertext{is the   \emph{quantum $n$-bracket} and }
%\\
   \{f_1,\ldots,f_n\}_{L} &:=
   (-i\hbar)^{-n}  \left[\ldots [L,f_1],\ldots,f_n\right](1)\pmod \hbar \label{eq.clbrack}
\end{align}
is the  \emph{classical $n$-bracket} generated by $L$. Here $n=0,1,2,3, \ldots$
\end{definition}

% \begin{remark}
Here %We can think that
$f_i$ are functions on a supermanifold or elements of an abstract commutative superalgebra.
One has to assume that an $n$-fold commutator $\left[\ldots [L,f_1],\ldots,f_n\right]$ in the above formulas is divisible by $(-i\hbar)^{n}$.
In particular, this makes sense for formal $\hbar$-differential operators on supermanifolds as defined here.
% \end{remark}

\begin{example}[$0$-, $1$- and $2$-brackets]
The    quantum  $0$-bracket  is simply
\begin{align}\label{eq.brack0}
    \{\void\}_{L,\hbar}&=L(1)\,;
\intertext{for the  quantum $1$-bracket   take $[L,f](1)=L(f1)-(-1)^{\Lt\ft}fL(1)=L(f)-L(1)f$, hence}
\label{eq.brack1}
    \{f\}_{L,\hbar}&=i\hbar^{-1}\bigl(L(f)-L(1)f\bigr)\,;
\intertext{similarly, for the $2$-bracket one has}
\label{eq.brack2}
    \{f,g\}_{L,\hbar}&=-\hbar^{-2} \bigl(L(fg)-L(f)g-(-1)^{\Lt\ft}fL(g)+ L(1)fg\bigr)\,.
\end{align}
\end{example}

Quantum brackets are themselves (formal)  differential operators in each argument {(but not $\hbar$-differential)}, moreover it is known~\cite{tv:higherder} that for any $n$ the $n$-bracket generates the $(n+1)$-bracket as a {``quantum correction''} to the Leibniz rule:
\begin{multline}\label{eq.quantleib}
    \{f_1,\ldots,f_{n-1},fg\}_{L,\hbar}=\{f_1,\ldots,f_{n-1},f\}_{L,\hbar}\,g+
    (-1)^{\e}f\,\{f_1,\ldots,f_{n-1},g\}_{L,\hbar}\\
    + (-i\hbar)\{f_1,\ldots,f_{n-1},f,g\}_{L,\hbar}\,,
\end{multline}
where $(-1)^{\e}=(-1)^{(\tilde L +\ft_1+\ldots +\ft_{n-1})\ft}$.
Modulo $\hbar$ the extra term  disappears and the resulting classical brackets become multiderivations. Hence they must correspond to a Hamiltonian.

\begin{theorem} \label{thm.symbol}
Let $L$ be a formal $\hbar$-differential operator. The Hamiltonian $H$ for the classical brackets generated by $L$ is  the principal symbol of $L$, $H=\symb(L)$.
\end{theorem}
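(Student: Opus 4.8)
The plan is to compute the principal symbol of the normalized iterated commutator that appears in the definition~\eqref{eq.clbrack} of the classical brackets, and then to identify its restriction to the zero section with the value of the operator on $1$ taken modulo $\hbar$. The whole argument rests on Lemma~\ref{lem.symbcommut}, together with one elementary observation about how applying an operator to the constant function interacts with the symbol.

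First I would record that for any formal $\hbar$-differential operator $A$ one has
\begin{equation*}
    A(1) \pmod{\hbar} = \symb(A)|_{p=0}\,.
\end{equation*}
Indeed, writing $A=\sum_k A^{a_1\ldots a_k}(x)\hp_{a_1}\ldots\hp_{a_k}$, every momentum operator $\hp_a=-i\hbar\,\partial/\partial x^a$ annihilates the constant $1$, so $A(1)$ equals the free coefficient $A^0(x)$; passing to $\hbar=0$ retains only its $\hbar$-independent part, which is precisely $\symb(A)$ evaluated at $p=0$. Next, identifying each $f_i$ with the operator of multiplication by $f_i$, it is a formal $\hbar$-differential operator of total degree $0$ with $\symb(f_i)=f_i$ (constant in the momenta). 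Since $(-i\hbar)^{-1}=i\hbar^{-1}$, Lemma~\ref{lem.symbcommut} yields
\begin{equation*}
    \symb\bigl((-i\hbar)^{-1}[A,f_i]\bigr)=\{\symb(A),f_i\}\,,
\end{equation*}
the Poisson bracket on $T^*M$. Setting $H=\symb(L)$ and applying this identity repeatedly, first to $A=L$, then to $A=(-i\hbar)^{-1}[L,f_1]$, and so on, an induction gives
\begin{equation*}
    \symb\Bigl((-i\hbar)^{-n}[\ldots[L,f_1],\ldots,f_n]\Bigr)=\{\ldots\{H,f_1\},\ldots,f_n\}\,.
\end{equation*}

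Combining the two steps and evaluating on the zero section, I would conclude
\begin{equation*}
    \{f_1,\ldots,f_n\}_L
    =(-i\hbar)^{-n}[\ldots[L,f_1],\ldots,f_n](1)\pmod{\hbar}
    =\{\ldots\{H,f_1\},\ldots,f_n\}|_{p=0}\,,
\end{equation*}
which is exactly the $n$-th derived bracket of the Hamiltonian $H=\symb(L)$ in the sense of~\eqref{eq.highpoiss}. This identifies the principal symbol as the master Hamiltonian for the classical brackets, as claimed.

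The computation is routine once Lemma~\ref{lem.symbcommut} is available; the point needing care is the bookkeeping of $\hbar$-divisibility, so that at each stage of the induction the commutator is genuinely divisible by the appropriate power of $-i\hbar$ and both the symbol formula and the passage to $\hbar=0$ are legitimate. A secondary point I would make explicit is interpretive rather than computational: ``the Hamiltonian for the classical brackets'' must be understood through the higher derived bracket formula~\eqref{eq.highpoiss}, and the substance of the theorem is that this Hamiltonian coincides with the \emph{coordinate-free} principal symbol $\symb(L)$, not merely with some local expression. (In the super case all the brackets above carry the usual Koszul signs, which are propagated unchanged through the induction and which I would not write out.)
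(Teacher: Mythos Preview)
Your proposal is correct and follows essentially the same approach as the paper: both proofs rest on the observation that $A(1)\pmod\hbar=\symb(A)|_{p=0}$, then apply Lemma~\ref{lem.symbcommut} inductively to compute the principal symbol of the normalized iterated commutator as the iterated Poisson bracket of $H=\symb(L)$ with the $f_i$. The paper's write-up is slightly more compressed but the logical structure is identical.
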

\begin{proof} We need to prove the identity
\begin{equation}\label{eq.clbrackham}
    (-i\hbar)^{-n}  \left[\ldots [L,f_1],\ldots,f_n\right](1)\pmod \hbar =\{\ldots \{H,f_1\},\ldots,f_n\}_{|M}
\end{equation}
where $H=\symb(L)\in \fun (T^*M)$, $f_i\in\fun(M)$, and the brackets at the right-hand side are the Poisson brackets on $T^*M$. Indeed we observe that for an arbitrary formal $\hbar$-differential operator $A$,
\begin{equation*}
    A(1)\pmod \hbar = \symb(A)_{|M}
\end{equation*}
(application to $1$ gives the free term of the operator $A$, which modulo $\hbar$ is the zeroth term in the expansion of the principal symbol). Hence, by induction,
\begin{multline*}
    (-i\hbar)^{-n}  \left[\ldots [L,f_1],\ldots,f_n\right](1)\pmod \hbar= \\
    \symb\bigl(i\hbar^{-1} \bigl[\,i\hbar^{-1} [\ldots i\hbar^{-1}[L,f_1],\ldots,f_{n-1}]\,,f_n\,\bigr]\bigr)_{|M}=\\
    \bigl\{\,\symb\bigl(i\hbar^{-1}  \bigl[\ldots i\hbar^{-1}[L,f_1],\ldots,f_{n-1}\bigr]\bigr)\,, f_n\,\bigr\}_{|M}=\\
    \ldots = \bigl\{\bigl\{\ldots \bigl\{\symb(L),f_1\bigr\},\ldots,f_{n-1}\bigr\},f_n\bigr\}_{|M}
\end{multline*}
where we used formula~\eqref{eq.symbcommut}. % the identity $\symb(i\hbar^{-1}[A,B])=\{\symb(A),\symb(B)\}$.
\end{proof}

{Let a formal $\hbar$-differential operator $\D$  be   odd and satisfy $\D^2=0$. Since $\D^2=\frac{1}{2}[\D,\D]$,  from Theorem~\ref{thm.symbol} and Lemma~\ref{lem.symbcommut} it follows that the corresponding odd Hamiltonian $H:=\symb(\D)$ satisfies $\{H,H\}=0$ and therefore the classical brackets~\eqref{eq.clbrack}, with $L=\D$, form an $\Sinf$-algebra. Moreover, $\D^2=0$ implies (in fact, is equivalent to) that the quantum brackets~\eqref{eq.qubrack} generated by $\D$  satisfy themselves the higher Jacobi identities and so form an $\Linf$-algebra~\cite{tv:higherder}, though they are no longer multiderivations of the associative product and satisfy  instead  relation~\eqref{eq.quantleib}. Such a structure   introduced in~\cite{tv:higherder} is called in~\cite[\S5]{tv:microformal} an  \emph{$S_{\infty,\hbar}$-algebra}.}

\begin{remark}[on history and terminology]\label{rem.hist}
{A sequence of multilinear operations $\Phi^n_L$  for an operator $L$  on a graded-commutative  algebra %
was first introduced   by Koszul~\cite{koszul:crochet85}. They are basically~\eqref{eq.qubrack} without division by $(-i\hbar)^n$. Koszul himself was mostly interested in the case of an odd second-order operator $\D$. He established an identity linking the failure of Jacobi for $\Phi^2_{\D}$   with $\Phi^3_{\D}$ and $\Phi^3_{\D^2}$ and basically meaning that if $\D^2=0$, then the operation $\Phi^2_{\D}$ satisfies Jacobi identity up to a chain homotopy with $\Phi^3_{\D}$ as the  homotopy operator. Hence if   $\D$ is of second order and $\D^2=0$, it generates an odd Poisson (=Schouten or Gerstenhaber) bracket. Such operators later became known as  Batalin-Vilkovisky (BV) operators. A graded commutative algebra   with a BV operator is called a \emph{BV algebra}. } %(In~\cite{yvette:derived}, they are also called `exact Gerstenhaber algebras'.) 

{
In this paper, we use the name ``Batalin-Vilkovisky operator'' in a broader sense  including operators of higher order.
The study of an  analogue  of BV algebras based on  a higher-order operator $\D$ such that $\D^2=0$  was initiated by O.~Kravchenko in a seminal paper~\cite{kravchenko:bv}. She called the obtained structure a \emph{$BV_{\infty}$-algebra}. Kravchenko noticed that the condition $\D^2=0$ is equivalent to the   sequence of higher Jacobi identities for the sequence of   brackets $\Phi^n_{\D}$~\cite[Prop.~2]{kravchenko:bv}, so that $BV_{\infty}$ implies $\Linf$. %(Same observation appeared also in some physicists' works.)
(Note that there are also more general notions of a homotopy Gerstenhaber~\cite{avoronov:hga} and homotopy BV algebras~\cite{tamarkintsygan:bv}, which we do not need here.) In~\cite{tv:higherder}, Th.\,Voronov put forward a general algebraic mechanism leading to $\Linf$-algebras, for which Koszul's construction of brackets and Kravchenko's theorem are a particular example. The modification of Koszul's definition by the factor of $(-i\hbar)^n$ was suggested in~\cite{tv:higherder} to obtain a deformation of an $\Sinf$-structure.  We took the notion of $S_{\infty,\hbar}$-algebras from~\cite{tv:microformal}. It is   very close to $BV_{\infty}$-algebras in the sense of Kravchenko, but the difference is that it is based on $\hbar$-differential operators, which is essential for   our purposes.}
\end{remark}

\subsection{Main statement. ``Quantum'' and ``classical'' higher Koszul brackets.}\label{subsec.mainstate}

We can apply the above considerations to the situation where on a supermanifold $M$ there is an $\Sinf$-structure specified by an odd Hamiltonian $H$ satisfying the ``classical master equation'' $\{H,H\}=0$.
If there is an odd formal $\hbar$-differential operator satisfying $\D^2=0$ such that the odd brackets on $M$ coincide with the classical brackets generated by $\D$, the operator $\D$ is called a \emph{Batalin--Vilkovisky operator} for a given $\Sinf$-structure. {(See also the remark above.)}
By Theorem~\ref{thm.symbol}, then $\symb(\D)=H$. Hence finding $\D$ for a given $\Sinf$-structure{, i.e. lifting it to an $S_{\infty,\hbar}$-structure,}  is a ``quantization problem''  and $\D$ is not unique (since an operator $\D$ contains more data than its principal symbol).

Now return to our {particular} problem.
Our goal is to find a Batalin--Vilkovisky operator for the higher Koszul brackets on $\O(M)=\fun(\Pi TM)$ induced by a $\Pinf$-structure on $M$.

Recall from subsection~\ref{subsec.highkosz} that the odd  master Hamiltonian  for the higher Koszul brackets is
\begin{equation}\label{eq.masterkosz2}
    H_P =\{D, P^*\}\,,
\end{equation}
where $D=dx^a p_a$ and $P^*\in \fun(T^*(\Pi TM))$ is obtained from $P\in \fun(\Pi T^*M)\subset \fun(T^*(\Pi T^*M))$ by the Mackenzie--Xu transformation. See formulas~\eqref{eq.hamlichn},\eqref{eq.masterkosz}. In local coordinates, if $P=P(x,x^*)$, then $P^*=P(x,\pi)$, with $\pi_a$ being the momenta canonically conjugate with $dx^a$.

Both Hamiltonians $D$ and $P^*$ have natural  quantizations.

\begin{example}
The operator $-i\hbar\, d=-i\hbar\, dx^a\der{}{x^a}$ is a quantization of the Hamiltonian $D=dx^a p_a$, i.e. $\symb(-i\hbar\, d)=D$.
\end{example}

\begin{example}
For an arbitrary function $P\in \fun (\Pi T^*M)$, the operator $\hat P=P(x,-i\hbar \der{}{dx})$ is a quantization of the Hamiltonian $P^*=P(x,\pi)$, i.e. $\symb(\hat P)=P^*$.
\end{example}

The following statement is a generalization of Cartan's identity.

\begin{lemma}\label{lem.verscartan}
For arbitrary $T,S\in \fun (\Pi T^*M)$,
\begin{equation}\label{eq.verscartan}
    \bigl[[d,\hat T], \hat S\bigr]=\lsch T,S\rsch^{\widehat{\ }}\,.
\end{equation}
\end{lemma}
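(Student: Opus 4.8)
The plan is to establish the operator identity~\eqref{eq.verscartan} by reducing it to the Schouten bracket structure on $\Pi T^*M$ and exploiting the fact that both sides are $\hbar$-differential operators whose equality can be checked on generators. First I would recall that $\hat T=T(x,\hi\p/\p(dx))$ and $\hat S=S(x,\hi\p/\p(dx))$ are ``vertical'' formal $\hbar$-differential operators built from the momentum operators $\hp^{(a)}:=\hi\,\p/\p(dx^a)$ conjugate to the form-variables $dx^a$, while $d=dx^a\,\p/\p x^a$ differentiates in the base directions $x^a$. The key structural observation is that $\hat{(\,\cdot\,)}$ is an algebra map: since the operators $\hp^{(a)}$ commute among themselves, $\widehat{TS}=\hat T\hat S$ whenever $T,S$ depend only on $x^*$ (and more generally one must track the $x$-dependence carefully, which is where the Schouten bracket enters).

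The main computational step is to expand the left-hand side using the graded Leibniz/derivation identities for commutators. I would write $\bigl[[d,\hat T],\hat S\bigr]$ and note that $[d,\hat S]$ produces, via the Heisenberg relation~\eqref{eq.heis}, terms in which $d$ hits the $x$-dependence of the coefficients of $\hat S$, each such hit replacing one momentum factor by a derivative and carrying the factor $\hi$. Concretely, one finds $[d,\hat S]=\bigl(dx^a\,\widehat{\p S/\p x^a}\bigr)+\bigl(\text{reordering corrections}\bigr)$, and then the second commutator with $\hat T$ (or symmetrically, handling $[[d,\hat T],\hat S]$) pairs the base-derivative $\p/\p x^a$ coming from $d$ against the fiber-momentum $\p/\p x^*_a$ inside the Schouten bracket. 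The upshot is that the surviving terms assemble precisely into
\[
\sum_a\left(\der{T}{x^a}\der{S}{x^*_a} - (-1)^{(\widetilde T+1)(\widetilde S+1)}\der{S}{x^a}\der{T}{x^*_a}\right)\!\!\hat{\phantom{x}}\,,
\]
which is the hat of the canonical odd Schouten bracket $\lsch T,S\rsch$ on $\Pi T^*M$.

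The cleanest way to organize this, and the route I would actually take, is to avoid brute-force index juggling by using the symbol calculus already developed: by Lemma~\ref{lem.symbcommut}, $\symb(i\hbar^{-1}[A,B])=\{\symb(A),\symb(B)\}$, with $\symb(-i\hbar\,d)=D=dx^a p_a$ and $\symb(\hat T)=T^*$, $\symb(\hat S)=S^*$. Taking principal symbols of both sides reduces the identity to a bracket computation on $T^*(\Pi TM)$, where $\{\{D,T^*\},S^*\}$ can be matched with the Mackenzie--Xu image of $\lsch T,S\rsch$; this is essentially the classical Cartan identity. Since both sides of~\eqref{eq.verscartan} are ``vertical'' formal $\hbar$-differential operators of the same type and their principal symbols agree, together with a check that the full $\hbar$-expansions agree (both sides being manifestly built functorially from $T,S$ and $d$), equality of operators follows.

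The hardest part will be controlling the higher-$\hbar$ corrections rigorously: the symbol computation only pins down the leading ($\hbar^0$) behavior, whereas~\eqref{eq.verscartan} is an exact operator identity in all orders of $\hbar$. The genuine obstacle is verifying that no spurious terms of order $\hbar^2$ and higher survive when $d$ interacts twice (or more) with the $x$-dependence of the coefficients of $\hat S$ and $\hat T$. I expect to dispatch this by the same inductive reordering argument used in the proof of Lemma~\ref{lem.transflaw}: each extra base-derivative landing on a coefficient introduces a factor of $-i\hbar$ together with one fewer momentum operator, and the antisymmetrization forced by the double commutator $\bigl[[d,\hat T],\hat S\bigr]$ cancels all such corrections except the first-order one, which is exactly the Schouten-bracket term. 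Reducing to the generators $T,S$ linear in $x^*$ (the bivector case) first, and then invoking the multiderivation/Leibniz property~\eqref{eq.quantleib} to extend to arbitrary polynomial $T,S$, makes this cancellation transparent and completes the proof.
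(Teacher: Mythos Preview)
Your approach differs substantially from the paper's, and while the final strategy you sketch (reduction to generators plus Leibniz extension) could be made to work, the route you emphasize has a real gap and misses a much cleaner argument.

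The paper's proof bypasses all the $\hbar$-expansion bookkeeping by passing to the dual picture via the fiberwise $\hbar$-Fourier transform. Under this transform (cf.\ \eqref{eq.dstar}, \eqref{eq.pstar}), the operator $d$ on $\Pi TM$ corresponds to the divergence operator $\d=\d_\rho$ on $\Pi T^*M$, while $\hat T,\hat S$ correspond simply to multiplication by $T,S$. Hence $\bigl[[d,\hat T],\hat S\bigr]$ becomes $\bigl[[\d,T],S\bigr]$. The key point is that $\d$ is a differential operator of order exactly two, so a double commutator with multiplication operators is of order zero---a function. Evaluating at $1$ and applying the BV identity~\eqref{eq.schoutbv} gives $\d(TS)-\d(T)S-(-1)^{\widetilde T}T\d(S)=\lsch T,S\rsch$. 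This single structural observation (second order $\Rightarrow$ double commutator is a function) instantly kills all the ``higher-$\hbar$ corrections'' you identify as the hardest part.

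Regarding your plan: the principal-symbol argument is a genuine detour, not just incomplete. Equality of $\symb$ gives only the $\hbar^0$ part, and your assertion that ``both sides being manifestly built functorially from $T,S$ and $d$'' upgrades this to full equality is not an argument---many distinct operators share a principal symbol and are functorial in the inputs. Your fallback, reducing to generators and extending by a biderivation property, is the viable part of your proposal: since $\widehat{T_1T_2}=\hat T_1\hat T_2$ and $[\hat T_i,\hat S]=0$, both sides of~\eqref{eq.verscartan} are biderivations in $(T,S)$, so it suffices to check on $T,S\in\{f(x),\,x^*_a\}$. Two small corrections there: ``linear in $x^*$'' gives vector fields, not the ``bivector case''; and the Leibniz rule you need is the ordinary graded Leibniz identity for commutators, not the quantum-bracket relation~\eqref{eq.quantleib}.
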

\begin{proof}
Direct calculation using formula~\eqref{eq.Phatint}. Or, alternatively, choose a volume element $\rho$ and take fiberwise $\hbar$-Fourier transform of the left-hand side of~\eqref{eq.verscartan} and obtain $[[\d,T],S]$ where $\d=\d_{\rho}$ is the divergence operator on multivector fields. But since $\d$ is a differential operator of second order, $[[\d,T],S]$  is a differential operator of order zero, i.e.   a multivector field. Then $[[\d,T],S]=[[\d,T],S](1)=\d(TS)-\d(T)S-(-1)^{\tilde T}T\d(S)=\lsch T,S\rsch$.
\end{proof}

Now everything is ready for the main statement.

\begin{theorem}[a stronger version of Theorem~\ref{thm.DP}]
The operator $\D_P=[d,\hat P]$ is a Batalin--Vilkovisky operator for the $\Sinf$-structure on $\Pi TM$ induced by a $\Pinf$-structure on $M$ (i.e. for higher Koszul brackets).
\end{theorem}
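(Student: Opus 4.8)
The plan is to verify directly the three conditions in the paper's definition of a Batalin--Vilkovisky operator for the $\Sinf$-structure whose master Hamiltonian is $H_P$: that $\D_P$ is an odd formal $\hbar$-differential operator, that the classical brackets it generates are the higher Koszul brackets, and that $\D_P^2=0$. The first condition is essentially free. Indeed, $\hat P$ is a vertical formal $\hbar$-differential operator and $-i\hbar\,d$ is an $\hbar$-differential operator, so their commutator $[-i\hbar\,d,\hat P]=-i\hbar\,\D_P$ is a formal $\hbar$-differential operator which is moreover divisible by $\hbar$ by Lemma~\ref{lem.symbcommut}; dividing by $-i\hbar$ shows that $\D_P$ itself is a formal $\hbar$-differential operator, and it is odd since $d$ is odd and $\hat P$ is even (the hat operation preserves parity and $P$ is even).

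For the bracket condition I would reduce everything to a single symbol computation. By Theorem~\ref{thm.symbol}, the classical brackets generated by $\D_P$ are the multiderivation brackets attached to the Hamiltonian $\symb(\D_P)$, so it is enough to check that $\symb(\D_P)=H_P$. Writing $\D_P=\ih[-i\hbar\,d,\hat P]$ (note $\ih\cdot(-i\hbar)=1$, and the commutator is divisible by $\hbar$), I would apply Lemma~\ref{lem.symbcommut} together with the two Examples giving $\symb(-i\hbar\,d)=D$ and $\symb(\hat P)=P^*$, to obtain
\[
  \symb(\D_P)=\symb\bigl(\ih[-i\hbar\,d,\hat P]\bigr)=\{\symb(-i\hbar\,d),\symb(\hat P)\}=\{D,P^*\}=H_P,
\]
which is precisely the master Hamiltonian~\eqref{eq.masterkosz2} for the higher Koszul brackets. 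Since classical brackets are multiderivations completely determined by their Hamiltonian, this shows the classical brackets of $\D_P$ coincide with the higher Koszul brackets, reproving Theorem~\ref{thm.DP}.

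The genuinely new content, and the step I expect to be the crux, is $\D_P^2=0$. As $\D_P$ is odd, $\D_P^2=\tfrac12[\D_P,\D_P]$, so it suffices to show that $\D_P$ commutes with both of its building blocks $d$ and $\hat P$ and then to assemble the result by the graded Jacobi identity. First, $[\D_P,d]=[[d,\hat P],d]=0$ is a purely formal consequence of $d^2=0$: expanding and using graded Jacobi gives $[d,[d,\hat P]]=[d^2,\hat P]=0$, whence $[\D_P,d]=0$. Second, and this is where the homotopy Poisson master equation enters in an essential way, the generalized Cartan identity of Lemma~\ref{lem.verscartan} with $T=S=P$ yields
\[
  [\D_P,\hat P]=[[d,\hat P],\hat P]=\lsch P,P\rsch^{\widehat{\ }}=0,
\]
since $\lsch P,P\rsch=0$. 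Finally, the graded Jacobi identity applied to $[\D_P,[d,\hat P]]$ gives $[\D_P,\D_P]=[[\D_P,d],\hat P]-[d,[\D_P,\hat P]]$, and both commutators on the right vanish, so $[\D_P,\D_P]=0$ and hence $\D_P^2=0$.

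Thus the only non-formal input is concentrated in a single place: the vanishing $[\D_P,\hat P]=0$, which is equivalent to $\lsch P,P\rsch=0$ through Lemma~\ref{lem.verscartan}; everything else is bookkeeping with the algebra of formal $\hbar$-differential operators, the Heisenberg relation, and $d^2=0$. Accordingly I would organize the write-up in the order: (i) establish $\symb(\D_P)=H_P$ and invoke Theorem~\ref{thm.symbol} for the bracket statement; (ii) prove the two commutation relations $[\D_P,d]=0$ and $[\D_P,\hat P]=0$; (iii) assemble $\D_P^2=0$ by graded Jacobi.
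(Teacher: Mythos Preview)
Your proposal is correct and follows essentially the same route as the paper. The symbol computation for $\symb(\D_P)=\{D,P^*\}$ is identical to the paper's; for $\D_P^2=0$ the paper applies $\ad d$ to the identity $[[d,\hat P],\hat P]=0$ and uses $d^2=0$, whereas you isolate the two commutation relations $[\D_P,d]=0$ and $[\D_P,\hat P]=0$ and assemble via Jacobi---the same ingredients (Lemma~\ref{lem.verscartan}, $\lsch P,P\rsch=0$, $d^2=0$) rearranged, with the bonus that $[\D_P,d]=0$ is exactly what the paper needs again in the proof of the next theorem.
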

\begin{proof} There are two statements: that $\D_P$ indeed generates the Koszul brackets (as the classical brackets) and that $\D_P^2=0$. For the first statement, consider the principal symbol of $\D_P$. By Lemma~\ref{lem.symbcommut},
\begin{equation*}
    \symb(\D_P)=\symb([d,\hat P])=\symb((-i\hbar)^{-1}[-i\hbar\, d, \hat P)=\{\symb(-i\hbar\, d),\symb(\hat P)\}=\{D,P^*\}\,.
\end{equation*}
(Note that $d$ is not an $\hbar$-differential operator, so one cannot mistakenly decide that $\symb([d,\hat P])$ is zero!)
For the second statement, consider $\D_P^2$. We have
\begin{equation*}
     [[d,\hat P],\hat P]=0
\end{equation*}
(since by Lemma~\ref{lem.verscartan}, the left-hand side is $\lsch P,P\rsch^{\widehat{}}$). By applying the commutator with $d$, we obtain
\begin{equation*}
    0=[d[d,\hat P],\hat P]]=\pm [[d,\hat P],[d,\hat P]]
\end{equation*}
(since $d^2=0$). But this is exactly $[\D_P,\D_P]\equiv 2 \D_P^2=0$\,.
\end{proof}

Note that we also obtain  \emph{quantum Koszul brackets}  as the quantum brackets generated by $\D_P$,
\begin{equation}\label{eq.quantkosz}
    [\o_1,\ldots,\o_n]_{P,\hbar}:=(-i\hbar)^{-n}[\ldots[\D_P,\o_1],\ldots,\o_n](1)\,.
\end{equation}
This is a useful notion even for the ordinary Poisson case.

\begin{example}\label{ex.ordpoiss2}
Let $P$ be a bivector field defining an  ordinary  Poisson structure on a supermanifold $M$.
From calculations in Example~\ref{ex.ordpoiss},  $\D_P=-\hbar^2\p_P$ (where $\p_P$ is the Koszul--Brylinski operator) and we can see that
\begin{align*}
    [\void]_{P,\hbar}&=0\,,\\
    [\o]_{P,\hbar}&= -i\hbar\,\p_P(\o)\,,\\
    [\o_1,\o_2]_{P,\hbar}&=[\o_1,\o_2]_{P}\,.
\end{align*}
Quantum and classical Koszul $2$-brackets coincide because $\D_P$ is of second order. All the higher brackets are zero. In particular, from the quantum viewpoint the Koszul-Brylinski operator (with the factor of $-i\hbar$) is itself part of the sequence of brackets and the   known derivation property~\cite{koszul:crochet85}
\begin{equation}\label{eq.binkoszderdp}
    \p_P[\o_1,\o_2]_P = -[\p_P\o_1,\o_2]_P+(-1)^{\tilde\o_1+1}[\o_1,\p_P\o_2]_P\,
\end{equation}
becomes part of   higher Jacobi identities for   quantum Koszul brackets.
\end{example}

\subsection{``Quantum cotangent $\Linf$-bialgebroid''.}
\label{subsec.quantcotang}

Construction of the BV operator $\D_P$ generating the higher Koszul brackets on $\O(M)=\fun(\Pi TM)$ can be interpreted as a ``quantization'' of the cotangent $\Linf$-algebroid structure. We will see how to extend that to a ``quantization'' of  an  $\Linf$-bialgebroid.

The cotangent $\Linf$-algebroid is indeed an $\Linf$-bialgebroid. In the manifestation on $\Pi TM$, that means that $D$, the master Hamiltonian for the de Rham differential, and $H_P$, the master Hamiltonian for the higher Koszul brackets, make a commuting pair. This is equivalent to the odd Hamiltonian
\begin{equation}\label{eq.cldoub}
    D_{t}=D+tH_P
\end{equation}
depending on parameter $t\in \RR$ satisfying $\{D_t,D_t\}=0$ for all $t$. In terms of the brackets, this is the derivation property for $d$ and all the higher Koszul brackets.

This   lifts to the ``quantum level'' as follows.

\begin{theorem}
For every $t$, the formal $\hbar$-differential operator
\begin{equation}\label{eq.bvbialg}
    \hat D_t= -i\hbar\, d + t\D_P\,.
\end{equation}
is a Batalin--Vilkovisky operator  which is a quantum lift of   the  Hamiltonian $D_{t}=D+tH_P$.
The operator $\hat D_t$ can be also written as
\begin{equation}\label{eq.bvbialg2}
    \hat D_t=  e^{-\ih t\hat P}(-i\hbar\,d)e^{\ih t\hat P}\,.
\end{equation}
\end{theorem}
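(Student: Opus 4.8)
The plan is to prove the explicit conjugation formula~\eqref{eq.bvbialg2} first and then to deduce from it both the nilpotency $\hat D_t^2=0$ and the value of the principal symbol, which together establish that $\hat D_t$ is a Batalin--Vilkovisky operator quantizing $D_t$.

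First I would expand the right-hand side of~\eqref{eq.bvbialg2} by the adjoint (Hadamard) series
\[
e^{-A}Be^{A}=\sum_{k\geq 0}\frac{(-1)^k}{k!}(\ad_A)^k(B),\qquad \ad_A(B):=[A,B],
\]
with $A=\ih t\hat P$ and $B=-i\hbar\,d$. Using the central scalar identity $\ih\cdot(-i\hbar)=1$ and $[\hat P,d]=-[d,\hat P]=-\D_P$, the first-order term is
\[
[\,\ih t\hat P,\,-i\hbar\,d\,]=\ih\,t\,(-i\hbar)\,[\hat P,d]=t[\hat P,d]=-t\D_P,
\]
so the $k=0$ and $k=1$ contributions already sum to $-i\hbar\,d+t\D_P=\hat D_t$. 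It then suffices to check that the next commutator vanishes, which forces all higher terms to vanish as well:
\[
[\,\ih t\hat P,\,-t\D_P\,]=-\ih\,t^2\,[\hat P,\D_P]=0.
\]
Here $[\hat P,\D_P]$ vanishes: by graded antisymmetry $[\hat P,\D_P]=-[\D_P,\hat P]=-[[d,\hat P],\hat P]$, and Lemma~\ref{lem.verscartan} gives $[[d,\hat P],\hat P]=\lsch P,P\rsch^{\widehat{\ }}=0$ since $\lsch P,P\rsch=0$. Thus the series truncates after the linear term and~\eqref{eq.bvbialg2} holds.

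With~\eqref{eq.bvbialg2} established, nilpotency is immediate, since a conjugate of a square-zero operator is again square-zero:
\[
\hat D_t^{\,2}=e^{-\ih t\hat P}(-i\hbar\,d)^2\,e^{\ih t\hat P}=(-i\hbar)^2\,e^{-\ih t\hat P}\,d^2\,e^{\ih t\hat P}=0,
\]
using $d^2=0$. For the symbol I would use additivity of the principal symbol (immediate from~\eqref{eq.symbL}) together with $\symb(-i\hbar\,d)=D$ and $\symb(\D_P)=H_P$ (the latter computed in the proof of the main statement), giving
\[
\symb(\hat D_t)=\symb(-i\hbar\,d)+t\,\symb(\D_P)=D+tH_P=D_t.
\]
As $\hat D_t$ is an odd formal $\hbar$-differential operator with $\hat D_t^2=0$, Theorem~\ref{thm.symbol} identifies the Hamiltonian of the classical brackets it generates with $\symb(\hat D_t)=D_t$; hence $\hat D_t$ is a Batalin--Vilkovisky operator and a quantum lift of $D_t$.

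The step I expect to require the most care is the very meaning of the exponential $e^{\ih t\hat P}$: because $\ih=i/\hbar$ carries a negative power of $\hbar$ while $\hat P$ may contain terms of low degree in $x^*$, the operator $e^{\ih t\hat P}$ need not itself be a formal $\hbar$-differential operator in the sense of subsection~\ref{subsec.formalop}. The resolution is to read~\eqref{eq.bvbialg2} solely through its Hadamard expansion, which truncates and therefore yields a genuine formal $\hbar$-differential operator regardless of the status of the individual factors. If one prefers to avoid the exponential entirely, nilpotency can instead be obtained directly from $\hat D_t^2=(-i\hbar)^2d^2+(-i\hbar)\,t\,[d,\D_P]+t^2\D_P^2$, where $\D_P^2=0$ was already proved, $d^2=0$, and $[d,\D_P]=[d,[d,\hat P]]=\tfrac12[[d,d],\hat P]=0$ by the graded Jacobi identity.
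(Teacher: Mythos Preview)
Your proof is correct and essentially the same as the paper's: both compute the principal symbol additively, both establish the conjugation formula via the Hadamard series truncated by Lemma~\ref{lem.verscartan} and $\lsch P,P\rsch=0$, and both use $(\ad d)^2(\hat P)=0$ for the cross-term in $\hat D_t^2$. The only difference is order: the paper proves $\hat D_t^2=0$ directly first and then the conjugation identity, whereas you prove conjugation first and deduce nilpotency from it (and then also give the direct argument); your remark on interpreting $e^{\ih t\hat P}$ through the truncated adjoint series is apt and addresses a point the paper leaves implicit.
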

\begin{proof}
It is clear that   $\symb(\D_t)=D_t$\,, so  $\hat D_t$ is a quantization of the master Hamiltonian $D_t$. We need to show that $\hat D_t^2=0$. Indeed, $d^2=0$ and we know that $\D_P^2=0$, so we need $[d,\D_P]=0$. But $[d,\D_P]=[d,[d,\hat P]]=(\ad d)^2(\hat P)=0$.
Finally, we need to establish the identity
\begin{equation*}
   -i\hbar\, d + t\D_P=e^{-\ih t\hat P}(-i\hbar\,d)e^{\ih t\hat P}\,.
\end{equation*}
Indeed,
\begin{multline*}
   e^{-\ih t\hat P}(-i\hbar\,d)e^{\ih t\hat P}= e^{-\ih t\ad \hat P}(-i\hbar\,d)=
    -i\hbar\,d -\ih t(\ad \hat P)(-i\hbar\,d)= -i\hbar\,d-t\ad(\hat P)(d)=\\
    -i\hbar\,d-t[\hat P, d]= -i\hbar\,d+t[d,\hat P]=
    -i\hbar\, d + t\D_P
\end{multline*}
because
\begin{equation*}
    (\ad \hat P)^2(d)=[\hat P,[\hat P, d]]=\pm \lsch P,P\rsch ^{\widehat{}}=0\,.
\end{equation*}
\end{proof}

We considered the cotangent $\Linf$-bialgebroid in the manifestation on $\Pi TM$ and for it constructed a quantization. Let us see how this can be done in the dual picture on $\Pi T^*M$ and how these pictures will be related ``on the quantum level''.

On $\Pi T^*M$, the roles of brackets and homological vector field is swapped compared to $\Pi TM$: instead of $d$, there is the canonical Schouten bracket; and instead of the higher Koszul brackets, there is the Lichnerowicz differential $d_P$.  A Batalin--Vilkovisky operator for the Schouten bracket is $-\hbar^2\d$, where $\d=\d_{\rho}$ is the divergence operator  constructed with the help of some volume element $\rho$ on $M$. Hence the operator
\begin{equation} \label{eq.wrongbv}
    -\hbar^2\d +t(-i\hbar)d_P
\end{equation}
seems a natural choice for a Batalin--Vilkovisky operator on $\Pi T^*M$ for the $\Linf$-bialgebroid structure. However, \eqref{eq.wrongbv} does not work because this operator does not in general square to zero. Indeed, although $\d^2=0$ and $d_P^2=0$, we have for $[\d,d_P]$
\begin{equation*}
    [\d,d_P](T)=\d\lsch P,T\rsch + \lsch P, \d (T)\rsch=
    -\lsch \d (P), T\rsch -\lsch P, \d(T)\rsch +\lsch P, \d (T)\rsch =
    -\lsch \d (P), T\rsch\,.
\end{equation*}
Hence unless $\d (P)$ is zero, the operators $\d$ and $d_P$ do not commute and
$(-\hbar^2\d +t(-i\hbar)d_P)^2\neq 0$. One recognizes in $\d (P)$ a representative of the  \emph{modular class}  of a $\Pinf$-structure:  a cohomology class   $[\d(P)]\in H^*(\Mult(M),d_P)$ defined with the help of a volume element $\rho$ but independent of a choice of $\rho$. So a different choice of  $\rho$ does not solve the problem if the modular class $[\d(P)]$ is nonzero.
\begin{remark}
The modular class of a $\Pinf$-structure $[\d(P)]\in H^*(\Mult(M),d_P)$ is directly analogous to the constructions for ordinary Poisson manifolds~\cite{weinstein:modular}, Lie algebroids~\cite{evens-lu-weinstein:1999}  and $Q$-manifolds~\cite{lyakhovich:mosman1},\cite{tv:qman-esi}.
\end{remark}

The situation can be remedied by taking on $\Pi T^*M$ the   operator
\begin{equation}\label{eq.bigdp}
    \hat D_P:=-i\hbar(d_P+\d(P))
\end{equation}
instead of $-i\hbar d_P$. This does not change the principal symbol. The term $-i\hbar \d(P)$ is a ``quantum correction''.

\begin{lemma} One can express
\begin{equation}\label{eq.bigdp2}
    \hat D_P=-i\hbar[\d, P]\,.
\end{equation}
The operator $\hat D_P$ has square zero.
\end{lemma}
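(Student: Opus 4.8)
The plan is to establish the two assertions in turn: first the operator identity \eqref{eq.bigdp2}, and then deduce $\hat D_P^2=0$ from it. For the identity I would start from the fact that on $\Mult(M)=\fun(\Pi T^*M)$ the divergence operator $\d=\dr$ is the Batalin--Vilkovisky operator generating the canonical Schouten bracket, i.e. it obeys \eqref{eq.schoutbv}. Applying the commutator $[\d,P]$ (with $P$ read as the operator of multiplication by the even multivector field $P$) to an arbitrary $T\in\Mult(M)$ gives $[\d,P](T)=\d(PT)-P\,\d(T)$, since $\tilde P=0$. Relation \eqref{eq.schoutbv} with $S=T$ rewrites $\d(PT)-P\,\d(T)$ as $\lsch P,T\rsch+\d(P)\,T=d_P(T)+\d(P)\,T$. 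Hence $[\d,P]=d_P+\d(P)$ as operators on $\Mult(M)$, and multiplying by $-i\hbar$ yields \eqref{eq.bigdp}, which is exactly \eqref{eq.bigdp2}. The only delicate point here is the sign bookkeeping, and it is trivial because $P$ is even.

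For $\hat D_P^2=0$ I would use the presentation $\hat D_P=-i\hbar[\d,P]$ just obtained, so that $\hat D_P^2=-\hbar^2[\d,P]^2$, and reduce everything to $A^2=0$ for the odd operator $A:=[\d,P]$. Since $A$ is odd, $A^2=\frac{1}{2}[A,A]$, so it suffices to show $[A,A]=0$. My claim is that this follows by exactly the same formal argument already used to prove $\D_P^2=0$, under the replacements $d\to\d$ and $\hat P\to(\text{multiplication by }P)$. The two inputs are: (i) $\d^2=0$, which gives $(\ad\d)^2=0$ and hence $[\d,[\d,P]]=0$; and (ii) the multivector version of Lemma~\ref{lem.verscartan}, namely $[[\d,P],P]=\lsch P,P\rsch$, which vanishes because $P$ defines a $\Pinf$-structure, $\lsch P,P\rsch=0$. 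Applying $\ad\d$ to the identity $[[\d,P],P]=0$ and expanding by the graded Jacobi identity, the term containing $[\d,[\d,P]]$ drops out by (i), leaving $\pm[[\d,P],[\d,P]]=0$, i.e. $[A,A]=0$, as required.

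The main point that carries content — the only step beyond formal bookkeeping — is establishing the structural identity (ii) and checking that the graded Jacobi manipulation really annihilates every term but $[A,A]$. Identity (ii) is the multivector counterpart of Lemma~\ref{lem.verscartan}: because $\d$ is a differential operator of order two, the double commutator $[[\d,P],P]$ is of order zero, i.e. it is multiplication by the multivector field $[[\d,P],P](1)=\d(PP)-\d(P)P-P\,\d(P)=\lsch P,P\rsch$, which is precisely \eqref{eq.schoutbv} with $T=S=P$, and this is where the master equation $\lsch P,P\rsch=0$ enters. Once (i) and (ii) are in place the square-zero conclusion is purely formal and, reassuringly, structurally identical to the $\Pi TM$ computation for $\D_P$; the correction term $-i\hbar\,\d(P)$ in \eqref{eq.bigdp} is exactly what is needed to package the naive $d_P$ into the commutator $[\d,P]$ and thereby make $\hat D_P$ square to zero.
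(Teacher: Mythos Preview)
Your proof of the identity \eqref{eq.bigdp2} is exactly the paper's: expand $[\d,P](T)$ via \eqref{eq.schoutbv} and read off $d_P+\d(P)$.

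For $\hat D_P^2=0$ you take a genuinely different route. The paper expands $(d_P+\d(P))^2$ directly: $d_P^2=0$, $(\d(P))^2=0$, and $[d_P,\d(P)]=d_P(\d(P))=\lsch P,\d(P)\rsch=\pm\tfrac12\,\d\lsch P,P\rsch=0$, using that $d_P$ is a derivation and that $\d$ is a derivation of the Schouten bracket. Your argument instead works entirely with the commutator form $A=[\d,P]$, establishes $[\d,A]=0$ from $\d^2=0$ and $[A,P]=\lsch P,P\rsch=0$ from the BV identity, and then obtains $[A,A]=0$ by applying $\ad\d$ to $[A,P]=0$ and invoking graded Jacobi. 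Both arguments are short and correct; yours has the pleasant feature of being formally identical to the proof of $\D_P^2=0$ on $\Pi TM$ under the substitution $d\leftrightarrow\d$, $\hat P\leftrightarrow P$, making the parallel between the two manifestations explicit. The paper's direct expansion, on the other hand, isolates exactly which individual ingredients ($d_P^2=0$, oddness of $\d(P)$, and $d_P(\d(P))=0$) are responsible for the vanishing.
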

\begin{proof}
We have $[\d, P](T)=\d(PT)-P\d(T)=\d(P)T+P\d(T)+\lsch P,T\rsch -P\d(T)=\d(P)T +\lsch P,T\rsch=(d_P+\d(P))(T)$. Now, $(d_P+\d(P))^2=d_P^2+(\d(P))^2+[d_P,\d(P)]=[d_P,\d(P)]=d_P(\d(P))=\lsch P,\d(P)\rsch=\pm \d\lsch P,P\rsch=0$.
\end{proof}

\begin{theorem}
For every $t$, the   operator
\begin{equation}\label{eq.bvbialgdual}
    \hat D^*_t= - \hbar^2\, \d + t\hat D_P
\end{equation}
is a Batalin--Vilkovisky operator for the $\Linf$-bialgebroid structure on  $\Pi T^*M$.
The operator $\hat D^*_t$ can be also written as
\begin{equation}\label{eq.bvbialg2dual}
    \hat D^*_t=  e^{-\ih t  P}(-\hbar^2\,\d)e^{\ih t  P}\,.
\end{equation}
\end{theorem}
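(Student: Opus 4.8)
The plan is to prove both claims in parallel with the previous theorem on $\Pi TM$, exploiting the structural symmetry. The statement asserts two things: that $\hat D^*_t$ squares to zero (so it is a genuine BV operator) and that it admits the conjugation presentation \eqref{eq.bvbialg2dual}. First I would verify the principal symbol. By the lemma just proved, $\hat D_P=-i\hbar[\d,P]$, and since $\symb(\d)$ is the master Hamiltonian for the Schouten bracket (namely $D^*$ of \eqref{eq.masterhamforsch}, up to the $\hbar$-normalization implicit in $-\hbar^2\d$), Lemma~\ref{lem.symbcommut} gives $\symb(\hat D_P)=\{D^*,P\}=H_P^*$, the Lichnerowicz Hamiltonian of \eqref{eq.hamlichn}. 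Hence $\symb(\hat D^*_t)=D^*+tH_P^*$, which is exactly the dual master Hamiltonian $D^*_t$, confirming that $\hat D^*_t$ quantizes the classical bialgebroid structure on $\Pi T^*M$.

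Next I would establish $(\hat D^*_t)^2=0$ by expanding the square. Writing $A=-\hbar^2\d$ and $B=t\hat D_P$, we have $(\hat D^*_t)^2=A^2+AB+BA+B^2=A^2+[A,B]_++B^2$ where $[A,B]_+$ denotes the anticommutator (both operators being odd). The preceding lemma already supplies $B^2=0$ (the operator $\hat D_P$ has square zero), and $A^2=\hbar^4\d^2=0$. So the whole task reduces to showing the anticommutator $[\d,\hat D_P]$ vanishes. Using $\hat D_P=-i\hbar[\d,P]$, this anticommutator becomes proportional to $[\d,[\d,P]]$, which by the super-Jacobi identity equals $\tfrac{1}{2}[[\d,\d],P]=[\d^2,P]=0$ since $\d^2=0$. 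This is the clean conceptual route and parallels exactly the argument $[d,\D_P]=(\ad d)^2(\hat P)=0$ used on the $\Pi TM$ side.

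Finally I would derive the conjugation formula \eqref{eq.bvbialg2dual}, mirroring the computation of \eqref{eq.bvbialg2}. Expanding the right-hand side via the adjoint exponential,
\begin{equation*}
    e^{-\ih t P}(-\hbar^2\d)e^{\ih t P}=e^{-\ih t\,\ad P}(-\hbar^2\d)=-\hbar^2\d-\ih t(\ad P)(-\hbar^2\d)+\tfrac{1}{2}(\ih t)^2(\ad P)^2(-\hbar^2\d)-\ldots
\end{equation*}
The second term is $-\ih t\,[P,-\hbar^2\d]=-i\hbar\,t\,[\d,P]=t\hat D_P$, recovering the desired linear term. What must be checked is that the series terminates, i.e. that $(\ad P)^2(\d)=[P,[P,\d]]=0$. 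Now $[P,\d]$ is (up to the $-i\hbar$ factor) the operator $\hat D_P$, which by the lemma equals $d_P+\d(P)$ acting as multiplication-plus-derivation; iterating, $[P,[P,\d]]$ is governed by $\lsch P,P\rsch=0$, exactly as on the dual side where $(\ad\hat P)^2(d)=\pm\lsch P,P\rsch^{\widehat{\ }}=0$.

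The main obstacle I anticipate is bookkeeping the $\hbar$-powers and signs consistently so that the three statements (symbol, square-zero, conjugation) line up, since here $\d$ carries a factor $-\hbar^2$ while $\hat D_P$ carries $-i\hbar$, breaking the naive parallel with the $\Pi TM$ case; the asymmetric weighting is precisely the phenomenon the introduction flags as the loss of symmetry due to the modular class. The conceptual steps are forced, but one should confirm that $[P,\d]$ genuinely reproduces $d_P+\d(P)$ with the correct sign before invoking $\lsch P,P\rsch=0$ to kill the higher terms in the adjoint exponential.
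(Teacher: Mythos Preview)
Your proposal is correct and follows essentially the same route as the paper: compute the principal symbol via Lemma~\ref{lem.symbcommut} applied to $\hat D_P=-i\hbar[\d,P]$, reduce $(\hat D^*_t)^2=0$ to $[\d,[\d,P]]=0$ via $\d^2=0$ and the graded Jacobi identity, and obtain the conjugation formula by expanding $e^{-\ih t\,\ad P}(-\hbar^2\d)$ and observing that the series terminates at the linear term. The paper's proof is terser and leaves the vanishing of $(\ad P)^2(\d)$ implicit, whereas you spell out that it comes down to $[[\d,P],P]=\lsch P,P\rsch=0$ (the $\Pi T^*M$ counterpart of Lemma~\ref{lem.verscartan}); this is the right justification, though your phrasing ``is governed by $\lsch P,P\rsch=0$'' could be made sharper by simply stating that $\d$ generates the Schouten bracket, so $[[\d,P],P]=\lsch P,P\rsch$ as a zeroth-order operator.
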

\begin{proof}
The principal symbol of $\hat D^*_t$ is
\begin{equation*}
    \symb(\hat D^*_t)=\symb( - \hbar^2\, \d) - t\symb(i\hbar  d_P+i\hbar \d(P)) =
    \symb( - \hbar^2\, \d) - t\symb(i\hbar  d_P)=
    D^*_t H_P^*\,.
\end{equation*}
as claimed. For $(\hat D^*_t)^2$ we have
\begin{equation*}
    (\hat D^*_t)^2= (- \hbar^2\, \d + t\hat D_P)^2=-\hbar^2\,t\,[\d,\hat D_P] =-\hbar^2\,t\,[\d,-i\hbar[\d, P]]=0\,.
\end{equation*}
Finally,
\begin{equation*}
    e^{-\ih t  P}(-\hbar^2\,\d)e^{\ih t  P}=e^{-\ih t \ad P}(-\hbar^2\,\d)=-\hbar^2\,\d
    -\ih t \ad P (-\hbar^2\,\d)=\hat D^*_t\,.
\end{equation*}
\end{proof}

Classical description of the cotangent $\Linf$-bialgebroid structure is symmetric with respect to $\Pi TM$ and $\Pi T^*M$, and one can be obtained from another by the Mackenzie--Xu transformation.

On the level of the constructed BV operators,
our construction loses this symmetry. To amend
this, we will introduce operators acting on half-densities instead of functions.

% Note also that the BV operator $\hat D^*_t$ depends on a choice of a volume element  on $M$, unlike the BV operator $D_t$ on $\Pi TM$. To make these constructions more symmetric, it would make sense to work with half-densities instead of functions. We discuss that in the next section.

% Classical Hamiltonians on $\Pi TM$ and $\Pi T^*M$ that give the cotangent $\Linf$-algebroid structure induced by a $\Pinf$-structure on $M$ are related by the Mackenzie--Xu transformation. As we will see, on the quantum level this is done with the help of $\hbar$-Fourier transform. Note also that the BV operator $\hat D^*_t$ depends on a choice of a volume element  on $M$, unlike the BV operator $D_t$ on $\Pi TM$. To make these constructions more symmetric, it would make sense to work with half-densities instead of functions. We discuss that in the next section.

%%%%%%%%%%%%%%%%%%%%%%%%%%%%%%%%%%%%%%%%%%%%%%%%%%%%%%%%%%%%%%%%%%%%%%%%%
%%%%%%%%%%%%%%%%%%%%%%%%%%%%%%%%%%%%%%%%%%%%%%%%%%%%%%%%%%%%%%%%%%%%%%%%%
\section{``Symmetric theory''}\label{sec.further}

In this section we first develop a general theory
of operators acting on densities for dual vector bundles. Here the main results are Theorem~\ref{thm.dualdo} and Theorem~\ref{thm.dualquapull}. Secondly, we explain how to get brackets on functions from an operator acting on a one-dimensional module (such as half-densities). Finally, we arrive to  Theorem~\ref{thm:BVsym}, which gives the desired fully symmetric (in both manifestations on $\Pi TM$ and $\Pi T^*M$) description of the quantum cotangent $\Linf$-bialgebroid.

\subsection{Dualization for operators on vector bundles}\label{subsec.dualiz}

Our goal here is to develop convenient tools for working with operators on dual vector   bundles  such as $\Pi TM$ and $\Pi T^*M$. It will be based on fiberwise $\hbar$-Fourier transform that we will introduce below.

Let $E\to M$ be a vector bundle, $E$ and $M$ are (super)manifolds.
Informally, we want to describe the dual space $(\fun(E))'$ to the space of functions $\fun(E)$ in terms of geometric objects on the dual bundle $E^*\to M$. It is convenient to work in a slightly more general setting, namely to consider  densities instead of functions.

On a vector bundle $E$ consider   densities of weight $(\la,\mu)$ as objects of the form $$f(x,u)Dx^{\,\la}Du^{\,\mu}$$ in local coordinates, where $x^a$, $u^i$ are coordinates on the base and the fiber respectively. Here by $Dx$ we denote the (Berezin) coordinate volume element which transforms according to the formula $Dx=(Dx/Dx')Dx'$, where $Dx/Dx'=\Ber (\partial   x / \partial x')$, and similarly for $Du$.
~\footnote{On   supermanifolds there are more types of orientation conditions because    different combinations of signs in
   $\Ber_{\a,\b} J:= (\sign\det J_{00})^{\a}\, (\sign\det J_{11})^{\b}\, \Ber J$\,,
where $J$ is the Jacobi matrix of a change of coordinates,   give different analogues of  $\det J$ and $|\det J|$ of the ordinary case. Change of variables in Berezin integral includes $\Ber_{1,0}J$, hence $\det J_{00}>0$ is the orientability condition required for integration of densities of the form $f(x)Dx$ over a supermanifold. Integration of pseudodifferential forms $\o(x,dx)$ requires a different condition, $\Ber J>0$. See~\cite{tv:git}.  There are the corresponding types of densities whose transformation laws include powers of $\Ber_{\a,\b} J$. In the case of  super  fiber bundles, the number of orientation and density types  becomes even larger. These distinctions are not relevant for our purposes, so we  completely ignore them and in particular  will write $Dx^{\la}$ etc. instead of a more refined notation.% paying attention to these differences.
}
%(This makes sense because changes of coordinates have triangular form.)
Denote this space of densities $\Dens_{\la,\mu}(E)$. In particular, $\Dens_{\la,\la}(E)=\Dens_{\la}(E)$, where $\Dens_{\la}(E)$ is the usual space of $\la$-densities on a supermanifold $E$. (Unlike~\cite{tv:genstuli} we will be considering operators on densities of fixed weight, not on the algebra of densities.)

Let $f(x,u)Dx^{\,\la}Du^{\,\mu}\in \Dens_{\la,\mu}(E)$. Introduce its \emph{fiberwise $\hbar$-Fourier transform} by:
\begin{equation}\label{eq.fourier}
    F[f(x,u)Dx^{\,\la}Du^{\,\mu}] = \left(\int_{E_x}e^{-\ih u^iw_i} f(x,u)\,Du\right)Dx^{\la}Dw^{1-\mu}\,.
\end{equation}
Here $w_i$ are coordinates in the fiber of $E^*$ such that the bilinear form $u^iw_i$ is invariant. In~\eqref{eq.fourier}, we use the identification $Du^{\mu-1}=Dw^{1-\mu}$.
It establishes an isomorphism
\begin{equation}\label{eq.fourier2}
    F\co \Dens_{\la,\mu}(E) \to \Dens_{\la,1-\mu}(E^*)\,,
\end{equation}
which holds  for all $\la$ and $\mu$.
Together with the natural identification $(\Dens_{\la,\mu}(E))'\cong \Dens_{1-\la,1-\mu}(E)$ (here the dual of a functional space  can mean its ``smooth part'' or we can simply agree provisionally not to keep track of the smoothness), this gives rise to an isomorphism, which we denote by the same letter $F$,
\begin{equation}\label{eq.fourier3}
    F\co  (\Dens_{\la,\mu}(E))' \to \Dens_{1-\la,\mu}(E^*)\,
\end{equation}
(here we applied~\eqref{eq.fourier2} for $1-\la,1-\mu$ instead of $\la,\mu$).
To put it differently, the isomorphism~\eqref{eq.fourier3} is equivalent to a non-degenerate pairing
\begin{equation}\label{eq.fourierpair}
    \Dens_{\la,\mu}(E)\times \Dens_{1-\la,\mu}(E^*)\to \CC
\end{equation}
given by the integral
\begin{equation}\label{eq.fourierpairint}
    \int_{E}e^{-\ih u^iw_i} f(x,u)\,g(x,w)\,Du\,Dw\,Dx\,.
\end{equation}
Here $f(x,u)Dx^{\la}Du^{\mu}\in \Dens_{\la,\mu}(E)$ and $g(x,w)Dx^{1-\la}Dw^{\mu}\in \Dens_{1-\la,\mu}(E^*)$\,.

The  fiberwise $\hbar$-Fourier transform~\eqref{eq.fourier} or the pairing that it provides~\eqref{eq.fourierpairint} make it possible to consider the formal {duals} to linear maps between functions or densities on vector bundles  as linear maps between the suitable densities on the dual bundles: if
\begin{equation}\label{eq.lineop}
    L\co \Dens_{\la,\mu}(E_1)\to \Dens_{\la,\mu}(E_2)\,,
\end{equation}
then
\begin{equation}\label{eq.lineopdual}
    L^*\co \Dens_{1-\la,\mu}(E_2^*)\to \Dens_{1-\la,\mu}(E_1^*)\,.
\end{equation}
If $L'$ stands for the usual formal dual treated as a map
\begin{equation}
    \Dens_{1-\la,1-\mu}(E_2)\cong (\Dens_{\la,\mu}(E_2))'\to (\Dens_{\la,\mu}(E_1))'\cong \Dens_{1-\la,1-\mu}(E_1)\,,
\end{equation}
then
\begin{equation}
    L^*=F_1\circ L'\circ F_2^{-1}\,,
\end{equation}
where $F_1=F_{E_1}$ and $F_2=F_{E_2}$ are the Fourier transforms for $E_1$ and $E_2$ respectively.

One particular useful case if that of half-densities, where an $L\co \Dens_{\frac{1}{2}}(E_1)\to \Dens_{\frac{1}{2}}(E_2)$ induces the $L^*\co \Dens_{\frac{1}{2}}(E_2^*)\to \Dens_{\frac{1}{2}}(E_1^*)$\,. Another option is to make use of a volume element for the base $M$ in order to identify ``base $\la$-densities'' with ``base  $0$-densities''. This would allow to introduce a $\rho$-dependent dual $L^*_{\rho}$, where $\rho\in \Vol(M)$ is a chosen volume element. If for example
\begin{equation}\label{eq.lineopfun}
    L\co \fun(E_1)\to \fun(E_2)\,,
\end{equation}
 then
\begin{equation}\label{eq.lineopfundual}
    L^*_{\rho}=\rho^{-1}\circ L^*\circ\rho\co \fun(E_2^*)\to \fun(E_1^*)\,.
\end{equation}

We will apply these constructions to $\hbar$-differential operators on vector bundles and also to some special Fourier integral type operators (see below).

Before doing that, consider gradings. Any object on a vector bundle has a natural $\ZZ$-grading, which following~\cite{tv:graded} we call \emph{weight} and denote $\w$. So on $E$, with fiber coordinates $u^i$, we have $\w(u^i)=+1$ and $\w(\lder{}{u^i})=-1$. Also, if $w_i$ are the corresponding dual fiber coordinates for $E^*$, then $\w(w_i)=-1$. (This is counting weights relative $E$, $\w=\w_{E}$. Of course, for $E^*$ considered on its own, $\w_E=-\w_{E^*}$.)

Dealing with densities, one needs to take into account the weights of coordinate volume elements. If we have $n$ even and $m$ odd variables among $u^i$, then
\begin{equation*}
    \w(Du)=n-m\,.
\end{equation*}
Respectively, if we need to use $\d$-functions, then
\begin{equation*}
    \w(\d(u))=-n+m\,.
\end{equation*}
The integral symbol $\int$ has weight zero.
The  fiberwise $\hbar$-Fourier transform~\eqref{eq.fourier} and  the pairing~\eqref{eq.fourierpair} behave nicely with respect to grading.
\begin{lemma}\label{lemma.fouriergrad}
The  fiberwise $\hbar$-Fourier transform
\begin{equation}\label{eq.fou}
    F\co \Dens_{\la,\mu}(E) \to \Dens_{\la,1-\mu}(E^*)
\end{equation}
preserves   grading given by $\w_E$. Formula~\eqref{eq.fourierpair} gives a non-degenerate pairing of elements of weight $\a$ in $\Dens_{\la,\mu}(E)$ with elements of weight $-\a$ in $\Dens_{1-\la,\mu}(E^*)$.
\end{lemma}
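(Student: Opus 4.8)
The plan is to verify both claims by direct bookkeeping of the weight $\w_E$, checking that every ingredient entering the fiberwise $\hbar$-Fourier transform \eqref{eq.fourier} is weight-homogeneous and that the total weight is preserved. First I would compute the weight of the exponential kernel $e^{-\ih u^i w_i}$. The weights of the fiber coordinates are $\w(u^i)=+1$ on $E$ and $\w(w_i)=-1$ for the dual fiber coordinates on $E^*$, so the invariant pairing $u^iw_i$ has weight zero; since $\hbar$ carries weight zero, the kernel itself has weight zero and does not contribute. Next I would account for the measures: the statement $\w(Du)=n-m$ (where $n,m$ are the numbers of even and odd fiber variables) gives $\w(Du^{\,\mu})=\mu(n-m)$, and the Fourier transform trades $Du^{\,\mu}$ for $Dw^{\,1-\mu}$ via the stated identification $Du^{\,\mu-1}=Dw^{\,1-\mu}$. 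Because $\w(Dw)=-(n-m)$ (the dual fiber has the opposite weight count), I would check that $\w(Du^{\,\mu})$ and $\w(Dw^{\,1-\mu})$ combine consistently with the weight of the integration.

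The core computation is then to track a monomial. Take a homogeneous element $f(x,u)\,Dx^{\,\la}Du^{\,\mu}$ of weight $\a$; since $Dx$ lives on the base it has $\w_E=0$, and $\w(Du^{\,\mu})=\mu(n-m)$, so the weight $\a$ is carried by $f$ together with $\mu(n-m)$. After applying \eqref{eq.fourier}, I would use $\w(\int)=0$ together with $\w(Du)=n-m$ to compute the weight of $\int_{E_x}e^{-\ih u^iw_i}f(x,u)\,Du$ as a function of $w$, and then add $\w(Dw^{\,1-\mu})=-(1-\mu)(n-m)$. The integral $\int \cdots Du$ raises the weight by $\w(Du)=n-m$ on top of the weight of $f$ (the $Du$ inside being a genuine integration measure, not a density factor), while the Fourier variable $w$ inherits weight $-1$ per slot so that the resulting function of $w$ has the correct homogeneity. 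Collecting all contributions, the shifts cancel and the output lands in weight $\a$ inside $\Dens_{\la,1-\mu}(E^*)$, which is the first assertion.

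For the second assertion I would unwind the pairing \eqref{eq.fourierpairint}: its integrand is $e^{-\ih u^iw_i}f(x,u)g(x,w)\,Du\,Dw\,Dx$, and the total integration $\int_E\cdots Du\,Dw\,Dx$ has weight zero, as does the kernel. Hence the pairing is nonzero only between an $f$-factor of weight $\a$ and a $g$-factor whose weight exactly cancels it after accounting for the measures $Du$, $Dw$; a short weight count shows this forces $g$ to have weight $-\a$ in $\Dens_{1-\la,\mu}(E^*)$, giving the claimed pairing of weight $\a$ with weight $-\a$. I expect the only delicate point to be the consistent treatment of the measure weights $\w(Du)$, $\w(Dw)$ versus the density-factor weights $\w(Du^{\,\mu})$, $\w(Dw^{\,1-\mu})$ — i.e. distinguishing the measure used for integration from the density symbol being transformed — but once those are kept separate the cancellations are forced and the conclusion is immediate.
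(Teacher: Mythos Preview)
Your proposal is correct and follows essentially the same approach as the paper's proof: a direct weight count, using $\w(u^iw_i)=0$, $\w(Du)=n-m$, $\w(Dw)=-(n-m)$, and $\w(\int)=0$, then checking that $\#u^i+(n-m)+(1-\mu)(-n+m)=\#u^i+\mu(n-m)$ so that the Fourier transform preserves $\w_E$; the pairing statement is handled the same way. The paper's argument is just the terse one-line version of the computation you outline, so there is no substantive difference.
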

\begin{proof}
Note that weights of densities need not be integral: for elements of $\Dens_{\la,\mu}(E)$ they take  values in $\ZZ+\mu(n-m)$, where $\dim E_x=n|m$. So $\w(f(x,u)Dx^{\la}Du^{\mu})=\#u^i+\mu(n-m)$. One immediately checks that  $\w(F[f(x,u)Dx^{\la}Du^{\mu}])=\#u^i+n-m+(1-\mu)(-n+m)=\#u^i+\mu(n-m)=\w(f(x,u)Dx^{\la}Du^{\mu})$.               (Note that $\w(u^iw_i)=0$.) Similarly for the pairing given by~\eqref{eq.fourierpair}: if $\#u^i(f)=k$, then for the integral to be non-zero, it should be that $\#w_i(g)=k$, i.e. $\w_E(g)=-k$. So the subspace of elements of weight $k+\mu(n-m)$ in  $\Dens_{\la,\mu}(E)$ is non-degenerately paired with the subspace of elements of weight $-k+\mu(-n+m)$ in $\Dens_{1-\la,\mu}(E^*)$.
\end{proof}

Denote by $\DO(\Dens_{\la,\mu}(E))$ the algebra of finite type $\hbar$-differential operators on $(\la,\mu)$-densities on   $E$ with fiberwise-polynomial coefficients.

There are two natural gradings defined on elements of this algebra: one by total degree of operator (see subsection~\ref{subsec.formalop}) and another by weight coming from the vector bundle structure of $E$. % (from the vector bundle structure of $E$)
We can write them as~\footnote{We attach the subscript $E$ to stress the relation with the bundle $E$.}
\begin{align}\label{eq.totdege}
\deg_E(L)&=\# \hp_a +\# \hp_i +\# \hbar\\
    \w_E(L)&=\# u^i-\# \hp_i \label{eq.we}
\intertext{(where $\#$ denotes the degree in given variables).  It is possible to introduce another invariant grading as the sum $\deg^*_E:=\deg_E+\w_E$\,, or}
\deg^*_E(L)&=\# \hp_a +\#u^i +\# \hbar\,.
\end{align}
We can consider the algebra $\DO(\Dens_{\la,\mu}(E))$ as bi-graded by $(\deg_E, \deg^*_E)$. Note that $\deg_E, \deg^*_E\geq 0$.
Define now  the algebra $\fDO(\Dens_{\la,\mu}(E))$ as the  formal completion  of the bi-graded algebra $\DO(\Dens_{\la,\mu}(E))$. Elements of  $\fDO(\Dens_{\la,\mu}(E))$ are formal $\hbar$-differential operators with fiberwise formal coefficients. We simply call them ``formal $\hbar$-differential operators'' on $E$.

\begin{theorem}\label{thm.dualdo}
Taking dual $L\mapsto L^*$  is an anti-isomorphism of   algebras
\begin{equation}\label{eq.dualop}
    \fDO(\Dens_{\la,\mu}(E))\to \fDO(\Dens_{1-\la,\mu}(E^*))\,,
\end{equation}
\begin{equation}\label{eq.antiisom}
    (L_1L_2)^*=(-1)^{\tilde L_1\tilde L_2}\,L_2^*L_1^*\,,
\end{equation}
which maps the subalgebra $\DO(\Dens_{\la,\mu}(E))$ on the subalgebra $\DO(\Dens_{1-\la,\mu}(E^*))$ and swaps the bi-grading: for every $L\in \DO(\Dens_{\la,\mu}(E))$
\begin{equation}\label{eq.bigradlstar}
    \deg_{E^*}(L^*)=\deg^*_E(L) \quad \text{and} \quad \deg^*_{E^*}(L^*)=\deg_E(L)\,.
\end{equation}
In the limit $\hbar\to 0$, the map $L\mapsto L^*$ induces an algebra isomorphism
\begin{equation}\label{eq.algmx}
    \fun(T^*E)\to \fun(T^*(E^*))
\end{equation}
which is the pull-back by the Mackenzie--Xu antisymplectomorphism $T^*(E^*) \to  T^*E$\,.
\end{theorem}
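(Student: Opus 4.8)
The plan is to verify the four assertions of Theorem~\ref{thm.dualdo} by reducing everything to the explicit behavior of the fiberwise $\hbar$-Fourier transform $F$ on the algebra generators, since both $\fDO(\Dens_{\la,\mu}(E))$ and $\fDO(\Dens_{1-\la,\mu}(E^*))$ are formal completions of algebras generated (in a local chart) by multiplication operators and by the momentum operators $\hp_a,\hp_i,\hp^i$. First I would compute, directly from the defining integral~\eqref{eq.fourier} together with the relation $L^*=F_1\circ L'\circ F_2^{-1}$, how $L\mapsto L^*$ acts on the generators: multiplication by the fiber coordinate $u^i$ and the fiber derivative $\hp_i=-i\hbar\,\partial/\partial u^i$ on $E$ must go over, under Fourier transform, to the dual fiber derivative $\hp^i=-i\hbar\,\partial/\partial w_i$ and to $w_i$ on $E^*$ respectively (up to signs and up to the $\lhi$ conventions coming from $e^{-\ih u^iw_i}$), while the base generators $x^a$ and $\hp_a$ are essentially untouched except for the parameter shift $\la\mapsto 1-\la$ recorded in~\eqref{eq.fourier2}. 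The key computation is the integration-by-parts identity showing that under the pairing~\eqref{eq.fourierpairint} the operator $u^i$ acting on $f$ is adjoint to $\hp^i$ acting on $g$, and $\hp_i$ on $f$ is adjoint to $w_i$ on $g$; this is the heart of the whole theorem and everything else is bookkeeping.

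Once the action on generators is in hand, the anti-isomorphism property~\eqref{eq.antiisom} is automatic from the general nonsense of dualizing: $(L_1L_2)'=(-1)^{\tilde L_1\tilde L_2}L_2'L_1'$ for the formal dual $L'$, and conjugating by the Fourier transforms $F_1,F_2$ (which are fixed isomorphisms) preserves this, so $L\mapsto L^*$ is an anti-homomorphism; that it is bijective follows because $F$ is invertible. The fact that the $\hbar$-differential subalgebras $\DO$ are mapped to each other follows because the generator correspondence $u^i\leftrightarrow\hp^i$, $\hp_i\leftrightarrow w_i$, $x^a\mapsto x^a$, $\hp_a\mapsto\pm\hp_a$ sends fiberwise-polynomial $\hbar$-differential operators to fiberwise-polynomial $\hbar$-differential operators (a polynomial in $u^i$ becomes a polynomial in $\hp^i$, i.e. a fiberwise differential operator of finite order, and conversely). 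The bi-grading swap~\eqref{eq.bigradlstar} is then just reading off the gradings from the generator correspondence using the explicit formulas~\eqref{eq.totdege}--\eqref{eq.we}: since $u^i$ (weight $+1$, $\deg_E$-degree $0$) is exchanged with $\hp^i$ (weight $-1$, $\deg$-degree $1$) and $\hp_i$ with $w_i$, one checks on generators that $\#u^i+\#\hp_i$ is preserved, the two roles of $\#u^i$ and $\#\hp_i$ are interchanged, and consequently $\deg_E\leftrightarrow\deg_E^*$; Lemma~\ref{lemma.fouriergrad}, which already records that $F$ preserves the $\w_E$-grading, can be invoked to shortcut the weight computation.

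For the final classical limit assertion, I would use Lemma~\ref{lem.symbcommut}: since $L\mapsto L^*$ is an anti-isomorphism of algebras and the principal symbol is multiplicative, the induced map $\symb(L)\mapsto\symb(L^*)$ is an algebra anti-isomorphism $\fun(T^*E)\to\fun(T^*(E^*))$, which is therefore an ordinary algebra \emph{isomorphism} because these function algebras are graded-commutative (the anti-isomorphism and isomorphism differ only by the sign in~\eqref{eq.antiisom}, which disappears on the commutative symbol level). Passing to $\hbar\to0$ in the generator correspondence replaces $\hp_i$ by the momentum $p_i$ conjugate to $u^i$ and $\hp^i$ by the momentum conjugate to $w_i$, so the symbol map sends $(u^i,p_i)\mapsto(p^{i},w_i)$-type coordinates precisely according to the Mackenzie--Xu antisymplectomorphism $T^*(E^*)\to T^*E$; identifying this coordinate formula with the known canonical diffeomorphism of the Remark in subsection~\ref{subsec.highkosz} finishes the proof.

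\medskip
\noindent\textbf{Main obstacle.} I expect the genuine work to be the sign-and-convention tracking in the super setting: the factors of $\lhi$ versus $\ih$ in $e^{-\ih u^iw_i}$, the parity signs in the anti-isomorphism rule, and the orientation/density subtleties swept under the rug in the footnote after~\eqref{eq.fourier} all have to be pinned down consistently to get the exact generator correspondence (and in particular to land on $1-\la$, not $\la$, and on the antisymplectomorphism rather than the symplectomorphism). The conceptual skeleton is short, but verifying that the integration-by-parts adjunction produces exactly the claimed signs — so that $\symb(L^*)$ is the Mackenzie--Xu pullback and not its negative or a sign-twisted variant — is where care is required.
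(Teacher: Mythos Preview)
Your proposal is correct and follows essentially the same route as the paper: the paper's proof also derives the anti-isomorphism from $(L_1L_2)'=(-1)^{\tilde L_1\tilde L_2}L_2'L_1'$, computes the images of the generators $\hp_a$, $\hp_i$, $u^i$, $f(x)$ by integration by parts in the pairing~\eqref{eq.fourierpairint}, reads off the bi-grading swap from those formulas, and then argues that modulo $\hbar$ the anti-isomorphism becomes an isomorphism of commutative $\fun(M)$-algebras whose coordinate expression matches the Mackenzie--Xu map. Your identification of the sign/convention tracking as the only real work is also in line with the paper, which records the explicit generator formulas $(\hp_a)^*=-\hp_a$, $(\hp_i)^*=w_i$, $(u^i)^*=\hp^i(-1)^{\itt}$ and leaves the rest as immediate consequences.
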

\begin{proof} The fact that it is an anti-isomorphism follows from the definition, as $(L_1L_2)'=(-1)^{\tilde L_1\tilde L_2}L_2'L_1'$. To prove the rest, one needs to see the images of the local generators.

By integration by parts, one gets $(\lder{}{x^a})^*=-\lder{}{x^a}$, $(\lder{}{u^i})^*=\ih w_i$, $(u^i)^*=(-1)^{\itt}(\hi)\lder{}{w_i}$. In other words,
\begin{equation}\label{eq.qumx}
   (\hp_a)^*=-\hp_a \,, \quad (\hp_i)^*= w_i \,, \quad (u^i)^*= \hp^i\,(-1)^{\itt}
\end{equation}
and since $(f(x))^*=f(x)$, it is an anti-isomorphism of $\fun(M)$-algebras. The statements concerning gradings follow immediately.

The relation with the Mackenzie--Xu transformation $T^*(E^*) \to  T^*E$ is clear from the above formulas (as, without hats, they give the coordinate expression of this map).
Indeed, since $L\mapsto L^*$ is an anti-isomorphism of $\fun(M)$-algebras, it remains so modulo $\hbar$. Hence the induced map~\eqref{eq.algmx} is  an isomorphism of commutative $\fun(M)$-algebras, so it must arise from some diffeomorphism  $T^*(E^*) \to  T^*E$ over $M$. That it coincides with the Mackenzie--Xu diffeomorphism, follows from~\eqref{eq.qumx}. Finally, since the map of algebras of formal $\hbar$-differential operators is an anti-isomorphism, it takes the commutator of operators to the negative of the commutator. Modulo $\hbar\to 0$ this gives that the Poisson bracket on $T^*E$ is mapped to the negative of the Poisson  bracket  on $T^*(E^*)$. (So we in particular recover the antisymplectomorphism property of the Mackenzie--Xu map, which is of course clear directly.)
\end{proof}

\begin{remark} In the classical limit, the bi-grading by $(\deg_E, \deg^*_E)$ becomes the bi-grading  associated with the double vector bundle structure:
\begin{equation*}
    \begin{CD} T^*E\cong T^*(E^*) @>>> E^* \\
    @VVV @VVV \\
    E @>>> M\,,
    \end{CD}
\end{equation*}
$\deg_E=\#p_a+\#p_i=\#p_a+\#w_i$\,, and $\deg_{E^*}=\#p_a+\#p^i=\#p_a+\#u^i$\,.
\end{remark}

As  mentioned, the ``most symmetric'' picture is obtained for half-densities, $\la=\mu=1/2$.
An example particularly interesting for us in this paper is that of $E=\Pi TM$. Then $D(x,dx)$ is an invariant volume element, so on $\Pi TM$ one can identify densities of any weight with just functions, i.e. pseudodifferential forms on $M$. On the other hand, for $\Pi T^*M$, the volume element $D(x,x^*)$ transforms as $(Dx)^2$. Hence half-densities on $\Pi T^*M$ can be identified with with multivector densities on $M$, i.e. (pseudo)integral forms on $M$. They can be written as $\s(x,x^*)Dx$. The pairing~\eqref{eq.fourierpairint} takes the form
\begin{equation}\label{eq.pairforms}
    \int_{\Pi TM}D(x,dx)\,e^{-\ih dx^a x^*_a}\, \o(x,dx)\,\s(x,x^*)\,.
\end{equation}
Here $\o(x,dx) \in \O(M)\cong\Dens_{\frac{1}{2}}(\Pi TM)$ and $\s(x,x^*)Dx\in \Mult(M,\Vol M)\cong \Dens_{\frac{1}{2}}(\Pi T^*M)$\,.

\begin{example}
We have for operators on the algebra $\O(M)$, from the pairing~\eqref{eq.pairforms}, $(\lder{}{x^a})^*=-\lder{}{x^a}$, $(dx^a)^*=-\hi\lder{}{x^*_a}(-1)^{\at}$ and $(\lder{}{dx^a})^*=\ih x^*_a$. We obtain, in particular,
\begin{equation}\label{eq.dstar}
    (-i\hbar\,d)^*=-\hbar^2\d
\end{equation}
where
\begin{equation}\label{eq.deltaonhalf}
    \d=(-1)^{\at}\der{}{x^a}\der{}{x^*_a}
\end{equation}
and
\begin{equation}\label{eq.pstar}
    P\left(x,\hi\der{}{dx}\right)^*=P(x,x^*)\,.
\end{equation}
(Note that here unlike section~\ref{sec.operator}, $\d$ is an operator on multivector densities and does not require a choice of a volume form on $M$ for its definition.)
\end{example}

Consider now an operator $L\co \Dens_{\la,\mu}(E_2)\to \Dens_{\la,\mu}(E_1)$, $L\co f_2(x,u_2)Dx^{\la}Du_2^{\mu}\mapsto f_1(x,u_1)Dx^{\la}Du_1^{\mu}$,   expressed by an integral formula
\begin{equation}\label{eq.qupull}
    f_1(x,u_1)=\int Du_2\Dbar w_2\, e^{\ih(S(u_1|w_2)-u_2w_2)} f_2(x,u_2)\,.
\end{equation}
Here we use $u_1^i$ and $u_2^{\a}$ for fiber coordinates for $E_1$ and $E_2$, $w_1=(w_{1i})$ and $w_2=(w_{2\a})$ for fiber coordinates in $E_1^*$  and $E_2^*$. The function $S(u_1|w_2)$ in the phase of the exponential is called ``generating function''. It  is  a formal power series in $w_2$. (It has a non-trivial transformation law under changes of coordinates that guarantees the invariance of the integral transform.) Such integral operators are a particular case   of the   operators introduced in~\cite{tv:microformal} and interpreted  there as certain ``quantum pullbacks''. In particular, they include ordinary pullbacks. {See also~\cite{operovermap}.}
\begin{example}\label{ex.linpull}
Suppose $\Phi\co E_1\to E_2$ is a morphism of vector bundles over $M$, $u_2^{\a}=u_1^i\Phi_i^{\a}(x)$. Let $S(u_1|w_2):=u_1^i\Phi_i^{\a}(x)w_{2\a}$. Then it is easy to see that the integral operator~\eqref{eq.qupull} with such a generating function $S$ is the pullback of functions, $L=\Phi^*\co \fun(E_2)\to \fun(E_1)$.
\end{example}

%We will give a description of the duals of such operators. Let $w_1=(w_{1i})$, $w_2=(w_{2\a})$ be fiber coordinates for $E_1^*$ and $E_2^*$, and $p_1=(w_{1i})$, $w_2=(w_{2\a})$

\begin{theorem}\label{thm.dualquapull}
If $L\co \Dens_{\la,\mu}(E_2)\to \Dens_{\la,\mu}(E_1)$ is an operator of the form~\eqref{eq.qupull}, then its dual  $L^*\co \Dens_{1-\la,\mu}(E_1^*)\to \Dens_{1-\la,\mu}(E_2^*)$ is an operator of the same form   given by the integral formula
\begin{equation}\label{eq.qupulldual}
    g_2(x,w_2)=\int Dw_1\Dbar u_1\, e^{\ih(S^*(w_2|u_1)-u_1w_1)} g_1(x,w_1)\,,
\end{equation}
where $S^*(w_2|u_1)=S(u_1|w_2)$.
\end{theorem}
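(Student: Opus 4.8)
The plan is to read off $L^*$ from its definition $L^*=F_1\circ L'\circ F_2^{-1}$, which, by non-degeneracy of the pairing~\eqref{eq.fourierpairint} (abbreviate it $\langle\,,\,\rangle_E$), is equivalent to the adjunction $\langle Lf_2,g_1\rangle_{E_1}=\pm\langle f_2,L^*g_1\rangle_{E_2}$ holding for all $f_2\in\Dens_{\la,\mu}(E_2)$ and $g_1\in\Dens_{1-\la,\mu}(E_1^*)$. The most transparent route is to substitute the kernel~\eqref{eq.qupull} of $L$ into the left-hand pairing and to extract the kernel of $L^*$ by comparison with the right-hand pairing.

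Carrying this out, the left-hand side becomes a single integral over $x$ and all four fibre variables $u_1,u_2,w_1,w_2$, with total phase $\ih\bigl(S(u_1|w_2)-u_1w_1-u_2w_2\bigr)$, the three summands coming respectively from $L$, from the $E_1$-pairing, and from $L$ again. Since $u_2,w_2$ are precisely the variables entering the $E_2$-pairing on the right, I would pull out the factor $e^{-\ih u_2w_2}f_2(x,u_2)$ together with its measure and collect the remaining integration over $u_1,w_1$ against $g_1(x,w_1)$; this leftover integral, viewed as a function of $(x,w_2)$, must be $(L^*g_1)(x,w_2)$. Its surviving phase is $\ih\bigl(S(u_1|w_2)-u_1w_1\bigr)$, which is exactly~\eqref{eq.qupulldual} upon setting $S^*(w_2|u_1):=S(u_1|w_2)$. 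Thus the dual of a quantum pullback is again a quantum pullback, whose generating function is the transpose of $S$.

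The main obstacle is the bookkeeping of the normalization distinguishing $\Dbar$ from $D$ and, in the super case, of the Berezin signs produced when the fibre variables are permuted and integrated. These are fixed by the requirement that the fiberwise $\hbar$-Fourier transform and its inverse compose to the identity: that convention is exactly what promotes the integration in $w_2$ internal to $L$ (which carries $\Dbar w_2$) to the $E_2$-pairing integration (which carries $Dw_2$), while demoting the $E_1$-pairing integration in $u_1$ to the integration internal to $L^*$ (which must then carry $\Dbar u_1$, as in~\eqref{eq.qupulldual}). I would pin the constants down either by evaluating $F_1\circ L'\circ F_2^{-1}$ termwise, where each Fourier factor carries its own bundle's normalization, or by invoking Lemma~\ref{lemma.fouriergrad} to confirm that weights and density weights match on the two sides. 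A reassuring consistency check is Example~\ref{ex.linpull}: for an honest morphism $\Phi\co E_1\to E_2$ one has $L=\Phi^*$, and the formula produces $L^*$ equal to the pullback along the transpose morphism $E_2^*\to E_1^*$, the $u_1$-integration collapsing to the expected $\delta$-function; this is correct only with the $\Dbar u_1$ normalization, confirming~\eqref{eq.qupulldual}. Finally one checks that $S^*$ inherits the transformation law making $L^*$ genuinely of the form~\eqref{eq.qupull}, which is automatic since the pairing~\eqref{eq.fourierpairint} is coordinate-free.
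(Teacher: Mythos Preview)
Your approach is essentially the same as the paper's: verify the adjunction identity $\langle Lf_2,g_1\rangle_{E_1}=\langle f_2,L^*g_1\rangle_{E_2}$ by substituting the integral kernels on both sides and observing that the resulting iterated integrals over $x,u_1,u_2,w_1,w_2$ agree. The paper's proof is in fact a single sentence to this effect (``by substituting, we arrive at the identity''), so your discussion of the $D$ versus $\Dbar$ normalization and the consistency check via Example~\ref{ex.linpull} goes beyond what the paper writes out, but it is the same argument.
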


\begin{proof} Directly: we need to prove  the equality
\begin{equation*}
    \int Dx Du_1Dw_1 \,e^{-\ih\,u_1w_1}f_1(x,u_1)g_1(x,w_1)=
    \int Dx  Du_2Dw_2 \,e^{-\ih u_2w_2}f_2(x,u_2)g_2(x,w_2)
\end{equation*}
for arbitrary $f_2(x,u_2)$ and $g_1(x,w_1)$,
where $f_1(x,u_1)$ and $g_2(x,w_2)$ are given by   formulas~\eqref{eq.qupull} and~\eqref{eq.qupulldual}. By substituting, we arrive at the identity.
\end{proof}

Theorem~\ref{thm.dualquapull} is a ``quantum analogue'' of Theorem 8 in~\cite{tv:microformal}.

\begin{example} Continuing in the setup of Example~\ref{ex.linpull}, we see that the dual to the pullback by a vector bundle morphism $\Phi$ over $M$ is the pullback by the dual   morphism $\Phi^*$. Hence the presented construction makes it possible to extend the notion of the dual operator to more general situations that naturally occur.  For example, to the case of a non-linear fiberwise map between vector bundles, a situation typical for $\Linf$-algebroids.
\end{example}

\begin{remark}
The fact that Fourier transform for odd variables acts like ``Hodge star operator'' was observed in early years of supergeometry. See Berezin~\cite{berezin:difforms};  Voronov--Zorich~\cite{tv:ivb},  also~\cite{tv:git}.
Fourier transform of geometric objects on vector bundles was considered in~\cite{tv:ivb}, \cite{tv:class}. In~\cite{tv:ivb}, a non-standard $\ZZ$-grading of pseudodifferential forms on a vector bundle was introduced (from modern viewpoint this extra grading comes from the double vector bundle structure of $\Pi TE$) and it was shown that fiberwise Fourier transform of forms preserves that grading. This is analogous to our Lemma~\ref{lemma.fouriergrad}. That the divergence operator $\d$ on multivector densities is  ``dual'' to the de Rham differential $d$ is a classical fact in differential geometry. It reappeared in supergeometry in Bernstein--Leites construction of integral forms~\cite{berl:int},\cite{berl:pdf} and in connection with Batalin--Vilkovisky formalism~\cite{hov:semi},\cite{tv:formsandsymplectic}.
Novel in this subsection are Theorems~\ref{thm.dualdo} and \ref{thm.dualquapull}, as well as  bi-grading   $(\deg_E,\deg_{E^*})$ of formal $\hbar$-differential operators on a  vector bundle $E$.
\end{remark}

\begin{remark}
Since we treat $\hbar$ as a formal parameter, one may ask in which sense oscillating exponentials in $\hbar$-Fourier transform, as well as in the integral operators~\eqref{eq.qupull}, are understood. These exponentials with $\hbar$ in the denominator can be defined as formal symbols satisfying natural properties, as explained in~\cite{tv:microformal}. An axiomatic theory of formal oscillatory integrals was developed by A.~Karabegov~\cite{karabegov:formal}.
\end{remark}

\subsection{More on BV operators and brackets}

In section~\ref{sec.operator} we considered brackets generated by an operator acting on a commutative algebra, with the standard example of the algebra of functions on a supermanifold. Now we need operators
defined on a (locally)  one-dimensional free module, an example of which is the case of operators acting on densities of a fixed weight. Particularly interesting for us is the case of half-densities.

Let $S$ be such a module over an algebra $A$. Let $\s$ be some basis element of $S$.
%We will use the notation $\s^{-1}$ for the coefficient at $\s$.
Consider an arbitrary operator $L\co S\to S$. In concrete examples it will be a formal $\hbar$-differential operator. We can define an operator $L_{\s}\co A\to A$ depending on a choice of $\s$, by
\begin{equation}\label{eq.lsigma}
    L_{\s}(f):=\s^{-1}L(\s f)\,.
\end{equation}
Consider quantum and classical brackets on $A$ generated by  $L_{\s}$ by \eqref{eq.qubrack} and \eqref{eq.clbrack}. Denote them $\{f_1,\ldots,f_n\}_{L,\s,\hbar}$ and $\{f_1,\ldots,f_n\}_{L,\s}$.

\begin{lemma} For any $n$,
\begin{align}
\{f_1,\ldots,f_n\}_{L,\s,\hbar}& =
 (-i\hbar)^{-n} \s^{-1} \left[\ldots [L,f_1],\ldots,f_n\right](\s) \\
   \{f_1,\ldots,f_n\}_{L,\s} &=
   (-i\hbar)^{-n}  \s^{-1}\left[\ldots [L,f_1],\ldots,f_n\right](\s)\pmod \hbar
\end{align}
\end{lemma}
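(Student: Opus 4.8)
The plan is to reduce the two identities in the statement to the corresponding definitions~\eqref{eq.qubrack} and~\eqref{eq.clbrack} applied to the operator $L_{\s}$, and then to show that the nested commutator $[\ldots[L_{\s},f_1],\ldots,f_n]$ acting on $1\in A$ coincides with $\s^{-1}[\ldots[L,f_1],\ldots,f_n](\s)$. The key observation is that, for an element $f\in A$, the operator of multiplication by $f$ commutes with conjugation by $\s$; more precisely, since the $f_i$ act on $S$ by multiplication and $\s$ is central as a basis element of the free module, one has $(f L)_{\s}=f L_{\s}$ and $(Lf)_{\s}=L_{\s}f$, so that conjugation by $\s$ intertwines $\ad f$ on operators on $S$ with $\ad f$ on operators on $A$.

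First I would record the elementary fact that the assignment $L\mapsto L_{\s}$, i.e. $L_{\s}=\s^{-1}\circ L\circ \s$ where $\s$ now denotes the multiplication operator by the basis element, is an algebra homomorphism from operators on $S$ to operators on $A$: indeed $(L_1L_2)_{\s}=\s^{-1}L_1L_2\s=(\s^{-1}L_1\s)(\s^{-1}L_2\s)=(L_1)_{\s}(L_2)_{\s}$. Next I would check that for $f\in A$ the multiplication operator satisfies $f_{\s}=f$, since $\s^{-1}f\s f'=\s^{-1}\s f f'=f f'$ using commutativity of $A$ and the fact that multiplication by $\s$ is $A$-linear. These two facts together give that conjugation by $\s$ is compatible with commutators against multiplication operators, so
\begin{equation*}
    \bigl[\ldots[L_{\s},f_1],\ldots,f_n\bigr]=\s^{-1}\bigl[\ldots[L,f_1],\ldots,f_n\bigr]\s\,.
\end{equation*}

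Finally I would evaluate both sides at $1\in A$. On the right, $\s^{-1}[\ldots[L,f_1],\ldots,f_n]\s\,(1)=\s^{-1}[\ldots[L,f_1],\ldots,f_n](\s)$, since multiplication by $\s$ sends $1$ to the basis element $\s$. Substituting this into the definitions~\eqref{eq.qubrack} and~\eqref{eq.clbrack} for $L_{\s}$ yields exactly the two claimed formulas, the second being just the first reduced modulo $\hbar$. I would note in passing that the divisibility hypothesis needed to make~\eqref{eq.clbrack} meaningful, namely that the $n$-fold commutator is divisible by $(-i\hbar)^n$, transfers from $L$ to $L_{\s}$ precisely because conjugation by $\s$ preserves the filtration by powers of $\hbar$ (it does not introduce or remove factors of $\hbar$).

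The only point requiring care, and the place I would expect a subtle sign or ordering issue rather than a genuine obstacle, is the interaction of the conjugation $\s^{-1}(-)\s$ with the super signs when $\s$ and the $f_i$ are not all even. One must verify that $f_{\s}=f$ and the homomorphism property hold with the correct Koszul signs; since $\s$ is a fixed homogeneous basis element and the bracket in~\eqref{eq.qubrack} is the graded commutator, the signs arising from moving $\s^{-1}$ past the $f_i$ cancel against those from moving $\s$ back, so the identity holds on the nose. Thus the substance of the proof is the homomorphism property of $L\mapsto L_{\s}$ together with $f_{\s}=f$, and everything else is formal.
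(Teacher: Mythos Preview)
Your proof is correct and is essentially the paper's own argument spelled out in full: the paper's proof consists of the single observation $[L_{\s},f]=[L,f]_{\s}$ followed by induction, and your homomorphism property of $L\mapsto L_{\s}$ together with $f_{\s}=f$ is exactly what yields that identity. The additional remarks on divisibility and super signs are reasonable elaborations but not part of the paper's proof.
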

\begin{proof}
Observe that $[L_{\s},f]=[L,f]_{\s}$ and then apply induction.
\end{proof}

A natural question is about the dependence of the constructed brackets on a choice of $\s$. Suppose $\s'=e^{g}\s$. We can apply a method from~\cite{tv:microformal} to compare the homological vector fields on the algebra $A$ for the brackets corresponding to $\s$ and $\s'$. For brackets ``without dash'',
%we obtain
\begin{equation*}
    f\mapsto f+ \e e^{-\ih f}L_{\s}(e^{\ih f})
\end{equation*}
and for brackets with dash,
%we obtain
\begin{equation*}
    f\mapsto f+ \e e^{-\ih f}L_{\s'}(e^{\ih f})=
    f+ \e e^{-g}e^{-\ih f}L_{\s}(e^ge^{\ih f})=
    f+ \e  e^{-\ih( f+\hi g)}L_{\s}(e^{\ih (f+\hi g)})\,.
\end{equation*}
In other words, the ``new'' homological field is obtained by the shift of argument by a fixed function $\hi g$. This is an $\Linf$-isomorphism which is the identity modulo $\hbar$. In particular,
%we see that
classical brackets do not depend on a choice of $\s$.

\subsection{Application to cotangent $\Linf$-bialgebroid. Discussion}\label{subsec.applcot}
We come back to the construction of a Batalin--Vilkovisky operator for the cotangent $\Linf$-algebroid (for a given $\Pinf$-structure). As we know, it is actually an $\Linf$-bialgebroid. The difference with subsection~\ref{subsec.quantcotang} is that
we use operators acting of half-densities (instead of functions).

\begin{theorem} \label{thm:BVsym}
The Hamiltonians defining the cotangent $\Linf$-bialgebroid structure lift to  mutually dual Batalin--Vilkovisky operators on $\Pi T^*M$ and $\Pi TM$:
\begin{equation}\label{eq.bv1}
    e^{-\ih P}\circ (-\hbar^2\,\d)\circ e^{\ih P}
\end{equation}
(acting on  multivector densities on $M$)
and
\begin{equation}\label{eq.bv2}
    e^{-\ih \hat P}\circ (-i\hbar\, d)\circ e^{\ih \hat P}
\end{equation}
(acting on forms). Here $P=P(x,x^*)$ and $\hat P=P(x,-i\hbar \der{}{dx})$ are as before.
\end{theorem}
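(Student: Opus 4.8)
The plan is to assemble the statement from results already in place, since the two displayed operators are nothing but the $t=1$ specializations of the one-parameter families built in subsection~\ref{subsec.quantcotang}, now reinterpreted on half-densities, together with the dualization machinery of subsection~\ref{subsec.dualiz}. Concretely, \eqref{eq.bv2} is $\hat D_1=-i\hbar\,d+\D_P$ from the theorem containing \eqref{eq.bvbialg}--\eqref{eq.bvbialg2}, and \eqref{eq.bv1} is $\hat D^*_1=-\hbar^2\,\d+\hat D_P$ from the theorem containing \eqref{eq.bvbialgdual}--\eqref{eq.bvbialg2dual}. Thus the only genuinely new content is (i) that on half-densities the divergence $\d$ is the canonical operator \eqref{eq.deltaonhalf}, requiring no choice of volume element, and (ii) that the two operators are exchanged by the Fourier-transform duality $L\mapsto L^*$.

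First I would record the Batalin--Vilkovisky property. For \eqref{eq.bv2} I invoke the earlier theorem directly: $\hat D_t^2=0$ for all $t$ and $\symb(\hat D_t)=D_t$, so at $t=1$ the operator squares to zero and has principal symbol $D+H_P$. For \eqref{eq.bv1} the same earlier computation gives $(\hat D^*_t)^2=0$ and the correct principal symbol; the one point to verify is that the $\d$ appearing here is the intrinsic operator \eqref{eq.deltaonhalf} on multivector densities (half-densities on $\Pi T^*M$), rather than the $\r$-dependent divergence of subsection~\ref{subsec.quantcotang}. Since conjugation by $e^{\pm\ih P}$ is an inner automorphism it automatically preserves the relation square-to-zero, and since $\symb$ is multiplicative with $\symb(e^{\ih P})\,\symb(e^{-\ih P})=1$ it does not alter the principal symbol; hence it suffices that $(-\hbar^2\d)^2=0$ and $\symb(-\hbar^2\d)=D^*$, which hold verbatim for the intrinsic $\d$, so that \eqref{eq.bv1} becomes canonical (volume-element-free).

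The main step is the duality. I apply the anti-isomorphism $L\mapsto L^*$ of Theorem~\ref{thm.dualdo} (for $E=\Pi TM$, $E^*=\Pi T^*M$, $\la=\mu=\tfrac12$) to \eqref{eq.bv2}. Using anti-multiplicativity~\eqref{eq.antiisom} together with the fact that $-i\hbar\,d$ is odd while $\hat P$ is even, the three factors reverse order without an extra sign, so $\bigl(e^{-\ih\hat P}(-i\hbar\,d)e^{\ih\hat P}\bigr)^*=(e^{\ih\hat P})^*\,(-i\hbar\,d)^*\,(e^{-\ih\hat P})^*$. The dual-generator formulas of the Example following~\eqref{eq.pstar}, namely $(-i\hbar\,d)^*=-\hbar^2\d$ in \eqref{eq.dstar} and $(\hat P)^*=P$ in \eqref{eq.pstar}, give $(e^{\pm\ih\hat P})^*=e^{\pm\ih P}$ and hence identify the right-hand side with an operator of the form \eqref{eq.bv1} acting on multivector densities.

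I expect the only real obstacle to be the sign and ordering bookkeeping in carrying the exponential conjugation through the anti-isomorphism: one must track which of $F$, $F^{-1}$ enters the pairing~\eqref{eq.fourierpairint} and the resulting sign in the modular correction term $\hat D_P=-i\hbar[\d,P]$ of \eqref{eq.bigdp2}, so as to land on $e^{-\ih P}(-\hbar^2\d)e^{\ih P}$ rather than its $t\mapsto-t$ cousin. This is exactly the place where working with half-densities, so that $\d$ is canonical, is essential, and is what restores the symmetry between the manifestations on $\Pi TM$ and $\Pi T^*M$ that was lost in subsection~\ref{subsec.quantcotang}; once the normalization in~\eqref{eq.fourierpairint} is fixed, the remaining assertions are immediate from the cited theorems.
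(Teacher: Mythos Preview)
The paper does not supply a proof of this theorem; it is stated as a consequence of the machinery developed in subsections~\ref{subsec.quantcotang} and~\ref{subsec.dualiz}, with only the clarifying remark that half-densities on $\Pi T^*M$ are multivector densities and half-densities on $\Pi TM$ are forms. Your proposal correctly reconstructs the intended argument: identify \eqref{eq.bv2} with $\hat D_{t=1}$ and \eqref{eq.bv1} with $\hat D^*_{t=1}$, invoke the earlier theorems for the BV property, and use the anti-isomorphism of Theorem~\ref{thm.dualdo} together with \eqref{eq.dstar}, \eqref{eq.pstar} for the duality. This is exactly the assembly the paper leaves to the reader.

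The sign issue you flag is genuine and not merely bookkeeping. A direct computation with~\eqref{eq.antiisom} gives
\[
\bigl(e^{-\ih\hat P}(-i\hbar\,d)\,e^{\ih\hat P}\bigr)^* \;=\; e^{\ih P}(-\hbar^2\,\d)\,e^{-\ih P}\,,
\]
which is the $t=-1$ member of the family~\eqref{eq.bvbialg2dual}, not the $t=+1$ member displayed in~\eqref{eq.bv1}. Both are BV operators lifting the same classical bialgebroid structure (they differ by $P\mapsto -P$, and $\lsch -P,-P\rsch=0$ as well), so the substance of the theorem is unaffected; but ``mutually dual'' in the strict sense of $L\mapsto L^*$ lands on the opposite sign. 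You are right to isolate this as the one point requiring care, and right that it is a normalization issue rather than a gap in the argument.
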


{Half-densities on $\Pi T^*M$ are multivector densities on $M$ and half-densities on $\Pi TM$ because of  canonical volume element  can be identified with functions, i.e. forms on $M$.}

We do not discuss the most general and ``abstract'' notion  of $\Linf$-bialgebroid or it quantum version. See different approaches and analysis in~D.~Bashkirov and A.~Voronov~\cite{bashkirov-voronov:bv}, \cite{bashkirov-voronov:lbalg} and Th.~Voronov~\cite{tv:microformal}, \cite{tv:linfbialg}. One thing that we would like to note, is the role of the modular class (which was appearing in section~\ref{sec.operator} {when we used an asymmetric picture}). In the description based on half-densities, {we expect it to arise}  as an obstruction to a ``quantum lift'' of the anchor. If such as obstruction vanishes, there are dual descriptions of the quantum anchor on $\Pi T^*M$ and $\Pi TM$ given by mutually dual integral operators as in subsection~\ref{subsec.dualiz}.

Let me elaborate this point. It is convenient to speak about general $\Linf$-algebroids. If an $\Linf$-algebroid structure is given in a vector bundle $E\to M$, its manifestation on $\Pi E$ is just a homological vector field $Q\in \Vect(\Pi E)$.  Higher anchors  as   multilinear maps from   $E$ to  $TM$\,---\,part of an $\Linf$-algebroid structure\,---\,combine into a single non-linear fiberwise $Q$-map $\Pi E\to \Pi TM$. See e.g.~\cite[Lem.~1]{tv:microformal}. In the dual manifestation on $\Pi E^*$, an $\Linf$-algebroid structure in $E$ becomes a collection of higher Lie-Schouten brackets, i.e. an $\Sinf$-structure on $\Pi E^*$. The ``dual anchor'' then is a thick $\Sinf$-morphism $\Pi T^*M \tto \Pi E^*$ in Voronov's sense~\cite{tv:nonlinearpullback} inducing an $\Linf$-morphism $\fun(\Pi E^*) \to \fun(\Pi T^*M)$~\cite{tv:nonlinearpullback},\cite{tv:microformal},\cite{tv:highkosz}. (The particular case of $E=T^*M$ and  an $\Linf$-morphism between the higher Koszul brackets and the canonical Schouten bracket was a problem posed by Khudaverdian-Voronov in~\cite{tv:higherpoisson} and solved in the above-cited works.)  We see the quantum version   as follows. A \emph{quantum $\Linf$-algebroid} structure in $E$ is given by an odd $\hbar$-differential operator $\hat H$ on $\Dens_{1/2}(\Pi E)$, of total degree $\deg \hat H=+1$ (see~\ref{subsec.dualiz},  where $\hat H^2=0$. Its classical limit, i.e. the principal symbol, gives a usual $\Linf$-algebroid  structure in $E$. A \emph{quantum anchor} %for $E$
has to be defined as an integral operator of  type~\eqref{eq.qupull}, $\O(M)\to \Dens_{1/2}(\Pi E)$ intertwining $-i\hbar d$ and $\hat H$.  In the dual description, a dual quantum anchor will be an integral operator $\Dens_{1/2}(\Pi E^*)\to \Mult(M)$ intertwining $\hat H^*$ and $-\hbar^2 \delta$. This is a lifting of the standard classical picture. We expect the modular class of $E$ play a role for such a lifting.  We hope to explore  this  and  the   ``bi-'' case  elsewhere.

%%%%%%%%%%%%%%%%%%%%%%%%%%%%%%%%%%%%%%%%%%%%%%%%%%%%%%%%%%%%%%%%%%%%%%%%%
%%%%%%%%%%%%%%%%%%%%%%%%%%%%%%%%%%%%%%%%%%%%%%%%%%%%%%%%%%%%%%%%%%%%%%%%%
%\input{references}

%\bibliographystyle{plain}
%%\bibliography{/home/kate/Desktop/work/general.bib}{}
%\end{document}

\end{document}